\newcommand{\abs}[1]{\left\vert#1\right\vert}
\newcommand{\set}[1]{\left\{#1\right\}}
\newcommand{\R}{\mathbb{R}}
\newcommand{\C}{\mathbb{C}}
\newcommand{\PV}{\operatorname{P.V.}}
\newcommand{\T}{\mathbb{T}}
\newcommand{\Id}{\operatorname{I}}
\newcommand{\s}{\sigma}
\newcommand{\Dom}{\operatorname{Dom}}
\newcommand{\B}{\mathbb{B}}
\newcommand{\Nat}{\mathbb{N}}
\newcommand{\Z}{\mathbb{Z}}
\renewcommand{\S}{\mathbb{S}}
\newcommand{\be}{\begin{equation}}
\newcommand{\ee}{\end{equation}}
\newtheorem{theorem}{Theorem}[section]
\newtheorem{lemma}[theorem]{Lemma}
\theoremstyle{definition}
\theoremstyle{remark}
\newtheorem{remark}[theorem]{Remark}
\numberwithin{equation}{section}
\begin{document}

\title[Fractional Laplacians, Minakshisundaram and semigroups]{Fractional Laplacians on the sphere, \\
the Minakshisundaram zeta function and semigroups}


\author[P.~L.~De N\'apoli]{Pablo Luis De N\'apoli}
\address{IMAS (UBA-CONICET) and Departamento de Matem\'atica, Facultad
de Ciencias Exactas y Naturales, Universidad de Buenos Aires, Ciudad Universitaria, 1428 Buenos Aires, Argentina}
\curraddr{}
\email{pdenapo@dm.uba.ar}

\author[P.~R.~Stinga]{Pablo Ra\'ul Stinga}
\address{Department of Mathematics, Iowa State University, 396 Carver Hall, Ames, IA 50014, United States of America}
\curraddr{}
\email{stinga@iastate.edu}

\thanks{The first author was supported by ANPCyT under grant PICT 2014-1771 and by Universidad de Buenos Aires under grant 2002016010002BA. He is a member of Conicet, Argentina. The second author was partially supported by Grant MTM2015-66157-C2-1-P, MINECO/FEDER, EU,
from Government of Spain}

\subjclass[2010]{Primary: 11M41, 26A33, 35R11. Secondary: 11M35, 35K08, 47D06}

\keywords{Fractional Laplacian on the sphere, zeta function, method of semigroups,
spherical harmonics, Harnack inequality}

\date{}

\begin{abstract}
In this paper we show novel underlying connections between fractional powers
of the Laplacian on the unit sphere
and functions from analytic number theory and differential geometry,
like the Hurwitz zeta function and
the Minakshisundaram zeta function.
Inspired by Minakshisundaram's ideas, we find a precise pointwise description of
$(-\Delta_{\S^{n-1}})^s u(x)$ in terms of fractional powers of the Dirichlet-to-Neumann map
on the sphere. The Poisson kernel for the unit ball will be essential for this part of the analysis.
On the other hand, by using the heat semigroup on the sphere,
additional pointwise integro-differential formulas are obtained. Finally, we prove a characterization
with a local extension problem and the interior Harnack inequality.
\end{abstract}

\maketitle

\section{Introduction}

Nonlinear problems with fractional Laplacians have been receiving a lot of attention
for the past 12 years. Fractional nonlocal equations appear in several areas
of pure and applied mathematics, see for instance
\cite{Alonso-Sun, Caffarelli-Stinga, Caffarelli-Vasseur, Silvestre CPAM, Zaslavsky}.

For a function $u:\R^n\to\R$, $n\geq1$, the fractional Laplacian $(-\Delta)^s u$
with $0<s<1$ is naturally defined in a spectral way
by using the Fourier transform as
$$\widehat{(-\Delta)^s}u(\xi)=\abs{\xi}^{2s}\widehat{u}(\xi),\quad\xi\in\R^n.$$
The following equivalent semigroup formula holds:
$$(-\Delta)^s u(X)=\frac{1}{\Gamma(-s)}\int_0^\infty\big(e^{t\Delta}u(X)-u(X)\big)\,\frac{dt}{t^{1+s}},
\quad X\in\R^n.$$
Here $\Gamma$ is the Gamma function and $\{e^{t\Delta}\}_{t\geq0}$
is the classical heat semigroup on $\R^n$.
This implies the pointwise integro-differential formula
$$(-\Delta)^s u(X)=\frac{4^s\Gamma(n/2+s)}{\pi^{n/2}|\Gamma(-s)|}
\PV\int_{\R^n}\frac{u(X)-u(Y)}{|X-Y|^{n+2s}}\,dY,\quad X\in\R^n,$$
see \cite{Stinga-Torrea-CPDE}.
Clearly, $(-\Delta)^s$ is a nonlocal operator. The Caffarelli--Silvestre extension theorem \cite{Caffarelli-Silvestre CPDE}
establishes that if $U=U(X,y)$ is the solution to
$$\begin{cases}
\Delta U+\frac{1-2s}{y}\partial_yU+\partial_{yy}U=0,&\hbox{for}~X\in\R^n,~y>0,\\
U(X,0)=u(X),&\hbox{for}~X\in\R^{n},
\end{cases}$$
then
$$-y^{1-2s}\partial_yU(X,y)\big|_{y=0^+}=\frac{\Gamma(1-s)}{4^{s-1/2}\Gamma(s)}(-\Delta)^s u(X).$$
Moreover, the solution $U$ is given explicitly as (see \cite{Stinga-Torrea-CPDE})
\begin{align*}
U(X,y) &= \frac{y^{2s}}{4^s\Gamma(s)}\int_0^\infty e^{-y^2/(4t)}e^{t\Delta}u(X)\,\frac{dt}{t^{1+s}} \\
&= \frac{\Gamma(n/2+s)}{\pi^{n/2}\Gamma(s)}\int_{\R^n}\frac{y^{2s}}{(|X-Y|^2+y^2)^{\frac{n+2s}{2}}}u(Y)\,dY.
\end{align*}
See \cite{Caffarelli-Silvestre CPDE, Gale-Miana-Stinga, Stinga-Torrea-CPDE} for more details
about the extension problem and its applications.

In this paper we present several descriptions of the fractional powers of the Laplacian
$\Delta_{\S^{n-1}}$ on the unit sphere
$$\S^{n-1}=\set{X\in\R^n:\abs{X}=1},\quad n\geq2.$$
The Laplacian on the sphere is defined in the following simple way.
If $u=u(x)$ is a real function on $\S^{n-1}$ then we denote by $\tilde{u}$
the extension of $u$ to $\R^n\setminus\set{0}$ which coincides with $u$
on $\S^{n-1}$ and is constant along the lines normal to $\S^{n-1}$,
namely, $\tilde{u}(X)=u(X/|X|)$, $X\neq0$. Then $\Delta_{\S^{n-1}}u$ is the restriction
of the function $\Delta\tilde{u}$ to $\S^{n-1}$.
As a Riemannian manifold, the sphere has a natural
Laplace--Beltrami operator, that coincides with $\Delta_{\S^{n-1}}$ as obtained above.

Similarly to the fractional Laplacian on $\R^n$, the
fractional powers of $-\Delta_{\S^{n-1}}$ are defined in a spectral way.
Indeed, the spectral decomposition of the Laplacian on the sphere
is given by the spherical harmonics, that we briefly describe
next (see \cite{Folland, Seeley}). We 
have $L^2(\S^{n-1})=\bigoplus_{k=0}^\infty SH^k$, where $SH^k$
denotes the space of spherical harmonics of degree $k\geq0$
of dimension $d_k=\frac{(2k+n-2)(k+n-3)!}{k!(n-2)!}$.
From now on and for the rest of the paper we fix an orthonormal basis
$\set{Y_{k,l}:1\leq l\leq d_k}$ of $SH^k$ consisting of real spherical harmonics
$Y_{k,l}$ of degree $k\geq0$. The spherical harmonics are the eigenfunctions of the Laplacian on the sphere, namely,
$$-\Delta_{\S^{n-1}}Y_{k,l}(x)=\lambda_kY_{k,l}(x),\quad x\in\S^{n-1},$$
with eigenvalues
$$\lambda_k=k(k+n-2),$$
of multiplicity $d_k$, for $k\geq0$.
If $u$ has an expansion into spherical harmonics as
\be\label{decomp f}
u(x)=\sum_{k=0}^\infty\sum_{l=1}^{d_k}c_{k,l}(u)Y_{k,l}(x),\quad
c_{k,l}(u)=\int_{\S^{n-1}}u(y)Y_{k,l}(y)\,d\mathcal{H}^{n-1}(y),
\ee
where $d\mathcal{H}^{n-1}$ is the $(n-1)$-dimensional Hausdorff measure restricted to $\S^{n-1}$,
then for any $\s\in\mathbb{C}$ such that $s=\mathrm{Re}(\s)>0$, we can define
\be\label{spectral-formula}
(-\Delta_{\S^{n-1}})^{\pm\s}u(x)= \sum_{k=0}^\infty\lambda_k^{\pm\s}\sum_{l=1}^{d_k}c_{k,l}(u)Y_{k,l}(x).
\ee
The orthogonal projector onto the subspace $SH^k$ of $L^2(\S^{n-1})$ is given by
\begin{equation}\label{projector}
P_ku(x)=\sum_{l=1}^{d_k}c_{k,l}(u)Y_{k,l}(x).
\end{equation}
Hence
\begin{equation}\label{spectral-formula2}
(-\Delta_{\S^{n-1}})^{\pm\s}u(x)=\sum_{k=0}^\infty\lambda_k^{\pm\s}P_ku(x),
\end{equation}
so the definition of $(-\Delta_{\S^{n-1}})^{\pm\s}u$ is independent of the basis.

Some care is needed for the formulas above to be well defined. For $(-\Delta_{\S^{n-1}})^{-\s}u$
to make sense we need $\displaystyle\int_{\S^{n-1}}u=0$. This condition
says that the term $P_0u(x)$, which corresponds to the eigenvalue $\lambda_0=0$ in
\eqref{spectral-formula} and \eqref{spectral-formula2}, vanishes,
so we may consider the sums as starting from $k=1$.
On the other hand, $(-\Delta_{\S^{n-1}})^\s u\in L^2(\S^{n-1})$ if and only if
$u\in\Dom((-\Delta_{\S^{n-1}})^\s)\equiv\big\{u\in L^2(\S^{n-1}):\sum_{k=0}^\infty\lambda_k^{2s}
\sum_{l=1}^{d_k}\abs{c_{k,l}(u)}^2<\infty\big\}$.
In particular, \eqref{spectral-formula} and \eqref{spectral-formula2}
make sense for functions $u\in C^\infty(\S^{n-1})$,
see \eqref{eq:decaycoefficients}.
Here $u\in C^m(\S^{n-1})$, $m\in\Nat_0\cup\set{\infty}$, means that the extension $\tilde{u}$
of $u$ along normal lines to the sphere is a $C^m$ function in a neighborhood of the sphere.
In general, $(-\Delta_{\S^{n-1}})^\s u$ is defined as a distribution in $H^{-\s}(\S^{n-1})$,
for every $u\in H^\s(\S^{n-1})$. In addition, if $u\in H^{-\s}(\S^{n-1})$ satisfies $\langle u,1\rangle=0$ then
$(-\Delta_{\S^{n-1}})^{-\s}u\in H^{\s}(\S^{n-1})$ with $\displaystyle\int_{\S^{n-1}}(-\Delta_{\S^{n-1}})^{-\s}u=0$.
In this paper we are mainly interested in describing pointwise formulas
and the extension problem for these fractional operators in connection
with functions from number theory and differential geometry, and semigroups.
Therefore we will always assume that all our functions are smooth.

The fractional Laplacian on the sphere is a natural object to consider
since it is the simplest example of a fractional nonlocal operator on a compact Riemannian manifold.
Moreover, the sphere has a rich structure related to the fact that
it is a homogenous space under the action of the Lie group $SO(n)$.
Such a rich theory and some clever formulas connected with the Minakshisundaram
zeta function in combination with the method of semigroups
will allow us to obtain rather explicit expressions for
the fractional operators in terms of the Dirichlet-to-Neumann map
and the heat semigroup on the sphere.
For applications, see for example \cite{Alonso-Sun}.
For hypersingular integrals and potential operators on the sphere
(which are not the same as the fractional powers of the Laplacian
on the sphere) the reader can consult \cite{Rubin,Samko}.
Certainly one can define fractional powers of Laplace--Beltrami operators in Riemannian
manifolds by using the spectral theorem. In those cases in which estimates for the corresponding heat semigroup
kernel are available, one could try to derive pointwise expressions and kernel estimates
for fractional operators by using the method of semigroups
from \cite{Gale-Miana-Stinga, Stinga-Torrea-CPDE},
in an analogous way as explained here for the case of the sphere (see section \ref{Section:extension}).
Observe that for this general procedure to apply we do not need the manifold to be necessarily compact.

A description of the contents of the paper follows.
\begin{enumerate}[(a)]
\item In sections \ref{Section:One-dimensional} and \ref{Section:One-dimensional-positive}
we first consider the cases of the
negative and positive powers of the Laplacian on the unit circle, respectively, in connection with the
Hurwitz zeta function the functions defined by Fine in \cite{Fine}.
\item In section \ref{Section:Tools} we define the
Dirichlet-to-Neumann operator $L$ for the Laplacian in the unit ball.
The semigroup generated by $L$ is obtained from the Poisson kernel for the ball.
We show how this kernel can be deduced by using the Funk--Hecke identity
and the generating formula for Gegenbauer polynomials.
\item In section \ref{Section:Negative} we relate the 
negative powers of $-\Delta_{\S^{n-1}}$ in dimension $n\geq3$
with the Minakshisundaram Riemann zeta function on the sphere.
We use this connection and the Poisson kernel to find precise estimates
for the kernel of $(-\Delta_{\S^{n-1}})^{-s}$.
\item In section \ref{Section:Positive}, inspired by Minakshisundaram's ideas,
we prove a numerical formula (Lemma \ref{Lem:Minak positive}) that 
permits us to express the fractional Laplacian
on the sphere $(-\Delta_{\S^{n-1}})^s$ in terms of the fractional
Dirichlet-to-Neumann operator $(L+\frac{n-2}{2})^{2s}$.
Precise kernel estimates are shown.
\item In section \ref{Section:extension} we use the heat semigroup on the sphere
to give equivalent formulas for $(-\Delta_{\S^{n-1}})^{\pm s}u(x)$ with kernel estimates.
We also present the extension problem characterization and,
as an application, the interior Harnack inequality.
\end{enumerate}

\noindent\textbf{Notation.} Throughout the paper $\s\in\C_+$
denotes a complex number with positive real part
$\mathrm{Re}(\s)=s>0$. For two positive quantities $A$ and $B$ we write $A\sim B$
to mean that there exist constants $c_{n,\s}$ and $C_{n,\s}$ such that $c_{n,\s}\leq A/B\leq C_{n,\s}$.
If $x,y\in\S^{n-1}$ then $d(x,y)=\cos^{-1}(x\cdot y)$
is the geodesic distance between $x$ and $y$. Notice that $|x-y|^2=2(1-x\cdot y)\sim d(x,y)^2$,
for all $x,y\in\S^{n-1}$.
We will always denote by $u$ a smooth real function on the sphere.

\section{Negative powers in the unit circle and the Hurwitz zeta function}\label{Section:One-dimensional}

In this section and the next one
we consider the one-dimensional case of the unit circle $\T=\S^1\subset\R^2$. As usual, we identify functions on
$\T$ with periodic functions on the interval $[0,1]$, so that $\Delta_{\T}=\frac{d^2}{dx^2}$.
The spherical harmonics become the complex exponentials $\{e^{2\pi ikx}\}_{k\in\Z}$, $x\in[0,1]$.
We have $-\Delta_{\T}e^{2\pi ikx}=(2\pi k)^2e^{2\pi ikx}$.

When $\displaystyle\int_{\T}u=0$ and
$\s\in\C_+$ with $s=\mathrm{Re}(\s)>0$, we can write
\begin{align*}
(-\Delta_\T)^{-\s/2}u(x) &= \sum_{k\in\Z\setminus\{0\}}(2\pi|k|)^{-\s}c_k(u)e^{2\pi ikx} \\
&= \sum_{k\in\Z\setminus\{0\}}(2\pi|k|)^{-\s}\int_0^1u(y)e^{2\pi ik(x-y)}\,dy \\
&=\int_0^1K_{-\s}(x-y)u(y)\,dy,
\end{align*}
where the kernel is given by
\begin{equation}\label{powers-kernel-S1}
K_{-\s}(x)=\sum_{k\in\Z\setminus\set{0}}\frac{e^{2\pi ikx}}{(2\pi|k|)^\s}.
\end{equation}

Our main interest will be to understand the behavior of $K_{-\s}(x)$ as $x\to0,1$.
Estimates for this kernel can be obtained, for example, by using the semigroup
method of \cite{Stinga-Torrea-CPDE} or the transference principle from \cite{Roncal-Stinga2}.
Nevertheless, here we take a different approach by connecting formula \eqref{powers-kernel-S1}
with some useful functions from analytic number theory.

The Hurwitz zeta function (see \cite[Chapter~12]{Apostol}, also \cite{Fine, online})
is initially defined for $0<x\leq1$ and $\s\in\C_+$
such that $s=\mathrm{Re}(\s)>1$ by
\begin{equation}\label{eq:Hurwitzzeta}
\zeta(\s,x)=\sum_{k=0}^\infty\frac{1}{(k+x)^\s}.
\end{equation}
When $x=1$ this reduces to the Riemann zeta function $\zeta(\s)=\zeta(\s,1)$.
Moreover,
\begin{equation}\label{eq:trivial}
\zeta(\s,x)=x^{-\s}+\zeta(\s,x+1).
\end{equation}
Using the Gamma function it is possible to extend $\zeta(\s,x)$ as an analytic function of $\s\in\C$
except for a simple pole at $\s=1$, see \cite[Theorem~12.4]{Apostol}.

A related function is the periodic zeta function defined
for $x\in\R$ and $s>1$ by
$$F(x,\s)=\sum_{k=1}^\infty\frac{e^{2\pi ikx}}{k^\s}.$$
This series converges absolutely if $s>1$. If $x$ is not an integer the series 
also converges conditionally whenever $s>0$.
Notice that $F(x,\s)$ is a periodic function with period 1 that
coincides with the Riemann zeta function $\zeta(\s)$
at $x=1$. It is clear then that
$$K_{-\s}(x)=\frac{1}{(2\pi)^\s}\big[F(x,\s)+F(-x,\s)\big].$$

The two functions $\zeta(\s,x)$ and $F(x,\s)$ are related by the following formula due to Hurwitz,
see \cite[Theorem~12.6]{Apostol}, also \cite{Fine}: if $0<x\leq1$ and $s>1$ then
$$\zeta(1-\s,x)=\frac{\Gamma(\s)}{(2\pi)^\s}\big[e^{-\pi i\s/2}F(x,\s)+e^{\pi i\s/2}F(-x,\s)\big].$$
If $x\neq1$ this representation is also valid for $s>0$.
If we substitute $x$ by $1-x$, then
$$\zeta(1-\s,1-x)=\frac{\Gamma(\s)}{(2\pi)^\s}\big[e^{-\pi i\s/2}F(-x,\s)+e^{\pi i\s/2}F(x,\s)\big].$$
Therefore
$$\zeta(1-\s,x)+\zeta(1-\s,1-x)
=\frac{\Gamma(\s)}{(2\pi)^\s}2\cos\big(\tfrac{\pi\s}{2}\big)\big[F(x,\s)+F(-x,\s)\big],$$
or
\begin{equation}\label{kernel-S1}
K_{-\s}(x)=\frac{1}{2\Gamma(\s)\cos\left(\frac{\pi\s}{2}\right)}\big[\zeta(1-\s,x)+\zeta(1-\s,1-x)\big].
\end{equation}

To estimate $K_{-\s}(x)$ we use \eqref{kernel-S1} and well known asymptotic
expansions for the Hurwitz zeta function. Recall that $\zeta(1-\s,1)=\zeta(1-\s)$. For $\s\neq1$ fixed,
$$\zeta(\s,x+1)=\zeta(\s)-\s\zeta(\s+1)x+O(x^2),\quad\hbox{as}~x\to0,$$
see \cite{online}. By replacing $\s$ by $1-\s$ in \eqref{eq:trivial} and using the asymptotic expansion above,
\begin{align*}
\zeta(1-\s,x) &= x^{\s-1}+\zeta(1-\s,x+1) \\
&= x^{\s-1}+\zeta(1-\s)-(1-\s)\zeta(2-\s)x+O(1),
\end{align*}
as $x\to0$. If we substitute $x$ by $1-x$ in the latter expansion, we get
$$\zeta(1-\s,1-x)=(1-x)^{\s-1}+\zeta(1-\s)-(1-\s)\zeta(2-\s)(1-x)+O(1),$$
as $x\to1$.
By plugging these estimates into \eqref{kernel-S1} we deduce the asymptotic formulas
$$K_{-\s}(x)=C_\s x^{\s-1}+O(1),\quad\hbox{as}~x\to0,$$
and
$$K_{-\s}(x)=C_\s(1-x)^{\s-1}+O(1),\quad\hbox{as}~x\to1.$$
We conclude that
$$K_{-\s}(x)\sim\frac{1}{x^{1-\s}}+\frac{1}{(1-x)^{1-\s}},\quad\hbox{as}~x\to0,1.$$

\section{Positive powers in the unit circle and the Hurwitz zeta \\
and Fine functions}\label{Section:One-dimensional-positive}

In this section we continue our analysis of the one-dimensional case of the unit circle $\T=\S^1\subset\R^2$,
$\T\equiv[0,1]$. We study the kernel of the fractional power operator
$(-\Delta_\T)^{\s/2}$, when $\s\in\C_+$ is such that $0<\mathrm{Re}(\s)<2$.
As in the previous section we denote by $\zeta$ the Hurwitz zeta function \eqref{eq:Hurwitzzeta}.

The key idea is to use the heat semigroup on the circle. This approach
for the case of the sphere will be developed in detail in section \ref{Section:extension}.
It is easy to check that the proof of Theorem \ref{thm:fractionalandheat}
is valid when $n=2$ and $s$ is replaced by $\s/2$,
see also \cite{Gale-Miana-Stinga, Roncal-Stinga1, Roncal-Stinga2}. Then, for any $x\in\T$,
$$(-\Delta_\T)^{\s/2}u(x)=\PV\int_\T\big(u(y)-u(x)\big)K_{\s}(x-y)\,dy,$$
where the kernel $K_\s$ is given by
$$K_\s(x)=\frac{1}{\Gamma(-\s/2)}\int_0^\infty W_t(x)\,\frac{dt}{t^{1+\s/2}}.$$
Here $W_t(x)$ is the heat kernel in the unit circle:
\begin{equation}\label{Fine:1}
\begin{aligned}
W_t(x) &= \sum_{k\in\Z}e^{-4\pi^2k^2t}e^{2\pi ikx} \\
&= 1+2\sum_{k=1}^\infty e^{-4\pi^2k^2t}\cos(2\pi kx).
\end{aligned}
\end{equation}
Notice that this kernel is essentially the so-called Jacobi theta function.

By following Fine \cite{Fine}, we define, for $0<x<1$ and $t>0$, the function
$$f(x,t)=1+2\sum_{k=1}^\infty e^{-\pi k^2t}\cos(2\pi k x).$$
From \eqref{Fine:1} it follows that
\begin{equation}\label{Fine:2}
W_t(x)=f(x,4\pi t).
\end{equation}
Next, Fine observes in \cite{Fine} that the functions
$$F(x,\omega)=\int_0^1f(x,t)t^{\omega/2-1}\,dt$$
and
$$G(x,\omega)=\int_1^\infty\big(f(x,t)-1\big)t^{\omega/2-1}\,dt$$
are entire functions in $\omega$. Additionally, Fine introduces the function
$$H(x,\omega)=F(x,\omega)+G(x,\omega)-\frac{2}{\omega}$$
and proves the following relation with the Hurwitz zeta function:
if $\mathrm{Re}(\omega)<0$ then
\begin{equation}\label{Fine:3}
H(x,\omega)=\frac{\Gamma(\frac{1-\omega}{2})}{\pi^{\frac{1-\omega}{2}}}\big[\zeta(1-\omega,x)+\zeta(1-\omega,1-x)\big],
\end{equation}
see \cite[(11)]{Fine}.
Thus, by using \eqref{Fine:2}, the change of variables $4\pi t\to t$ and the functions $F$, $G$ and
$H$, together with the functional identity \eqref{Fine:3}, we find that
\begin{align*}
K_\s(x) &=\frac{(4\pi)^{\s/2}}{\Gamma(-\s/2)}\int_0^\infty f(x,t)\,\frac{dt}{t^{1+\s/2}} \\
&=\frac{(4\pi)^{\s/2}}{\Gamma(-\s/2)}\bigg[\int_0^1f(x,t)\,\frac{dt}{t^{1+\s/2}}+
\int_1^\infty\big(f(x,t)-1\big)\,\frac{dt}{t^{1+\s/2}}+\frac{2}{\s}\bigg] \\
&=\frac{(4\pi)^{\s/2}}{\Gamma(-\s/2)}\bigg[F(x,-\s)+G(x,-\s)+\frac{2}{\s}\bigg] \\
&=\frac{(4\pi)^{\s/2}}{\Gamma(-\s/2)}H(x,-\s) \\
&=\frac{4^{\s/2}\Gamma(\frac{1+\s}{2})}{\Gamma(-\s/2)\sqrt{\pi}}\big[\zeta(1+\s,x)+\zeta(1+\s,1-x)\big].
\end{align*}
We can use the following properties of the Gamma function
\begin{equation}\label{Gamma}
\Gamma(\tfrac{1}{2}+z)\Gamma(\tfrac{1}{2}-z)=\frac{\pi}{\cos(\pi z)}\qquad
\Gamma(2z)=\frac{2^{2z-1}}{\sqrt{\pi}}\Gamma(z)\Gamma(z+\tfrac{1}{2})
\end{equation}
with $z=-\s/2$ to further simplify the expression above to
$$K_\s(x)=\frac{1}{2\Gamma(-\s)\cos(-\frac{\pi\s}{2})}\big[\zeta(1+\s,x)+\zeta(1+\s,1-x)\big].$$
This is perfectly consistent with \eqref{kernel-S1}.
By performing an asymptotic analysis with of the Hurwitz zeta function
analogous to the one we did in section \ref{Section:One-dimensional},
we are able to conclude that
$$K_{\s}(x)\sim\frac{1}{x^{1+\s}}+\frac{1}{(1-x)^{1+\s}},\quad\hbox{as}~x\to0,1.$$

\section{The semigroup generated by the Dirichlet-to-Neumann map}\label{Section:Tools}

In this section we introduce the Dirichlet-to-Neumann map $L$ for the Laplacian in the unit ball
$\B=\{X\in\R^n:|X|<1\}$, $n\geq2$. We show that the kernel of
the semigroup generated by $L$ is obtained from the Poisson kernel
for the unit ball. These objects will be very useful in the description of the fractional operators
in sections \ref{Section:Negative} and \ref{Section:Positive}.

Let $u$ be a smooth function on the sphere with series expansion \eqref{decomp f}.
We define the Dirichlet-to-Neumann operator $L$ on the sphere $\S^{n-1}$ by
$$Lu(x)=\sum_{k=0}^\infty k\sum_{l=1}^{d_k}c_{k,l}(u)Y_{k,l}(x)
=\sum_{k=0}^\infty kP_ku(x),$$
for $x\in\S^{n-1}$,
where $P_k$ is the orthogonal projector \eqref{projector}.
The series is absolutely convergent and can be differentiated term by term. Indeed,
for each multi-index $\alpha=(\alpha_1,\ldots,\alpha_n)\in\mathbb{N}_0^n$, there exists $C_{\alpha,n}>0$ such that
\begin{equation}\label{acotacion esf arm}
\abs{D^\alpha Y_{k,l}(X/\abs{X})}^2\leq (C_{\alpha,n})^2k^{2\abs{\alpha}+n-2},\quad\hbox{for all}~1\leq l\leq d_k,
\end{equation}
where $D^\alpha=\tfrac{\partial^{\abs{\alpha}}}{\partial X_1^{\alpha_1}\cdots\partial X_n^{\alpha_n}}$,
$\abs{\alpha}=\alpha_1+\cdots+\alpha_n$, see \cite{Seeley}. Then, for any $m\in\Nat$, by
the symmetry of $\Delta_{\S^{n-1}}$ and the Cauchy-Schwartz inequality,
\begin{equation}\label{eq:decaycoefficients}
\begin{aligned}
\abs{c_{k,l}(u)} &= \frac{1}{\lambda_k^m}\bigg|\int_{\S^{n-1}}(-\Delta_{\S^{n-1}})^mu(x)Y_{k,l}(x)\,d\mathcal{H}^{n-1}(x)\bigg| \\
&\leq \frac{C_{u,m}}{(k(k+n-2))^m}.
\end{aligned}
\end{equation}
Therefore the estimates in \eqref{acotacion esf arm} and \eqref{eq:decaycoefficients} give the conclusion.

The semigroup $\set{e^{-tL}}_{t\geq0}$ generated by the Dirichlet-to-Neumann operator
is related to the solution $w=w(X)$ to the Dirichlet problem in the unit ball
$$\begin{cases}
    \Delta w=0, & \hbox{in}~\B, \\
    w=u, & \hbox{on}~\S^{n-1},
  \end{cases}
$$
in the following way. The solution $w$ can be recovered from
$u$ by using the Poisson integral formula for the unit ball:
\begin{equation}\label{Poisson bola}
w(X)=\int_{\S^{n-1}}\frac{1-\abs{X}^2}{\omega_{n-1}(1-2X\cdot y+\abs{X}^2)^{n/2}}u(y)\,d\mathcal{H}^{n-1}(y),
\end{equation}
where $\omega_{n-1}=\frac{2\pi^{n/2}}{\Gamma(n/2)}$ is the surface area of $\S^{n-1}$.
On the other hand, if we introduce polar coordinates $X=rx$, $0<r\leq1$, $x\in\S^{n-1}$, then,
$$w(X)=\sum_{k=0}^\infty r^k\sum_{l=1}^{d_k}c_{k,l}(u)Y_{k,l}(x)=\sum_{k=0}^\infty r^kP_ku(x).$$
See \cite{Folland} for details. Now, 
by making the change of parameters $r=e^{-t}$, $t\geq0$, we can regard $w(X)$ as a function of
$t\geq0$ and $x\in\S^{n-1}$:
\be\label{Poisson sum}
e^{-tL}u(x)\equiv w(X)=\sum_{k=0}^\infty e^{-tk}\sum_{l=1}^{d_k}c_{k,l}(u)Y_{k,l}(x)=
\sum_{k=0}^\infty e^{-tk}P_ku(x).
\ee
This is in fact the heat-diffusion semigroup generated by $L$. Indeed,
we can differentiate the series in \eqref{Poisson sum} to get
$$-\partial_t e^{-tL}u(x)\big|_{t=0}=r\partial_rw(X)\big|_{r=1}=\partial_\nu w(X)=Lu(x),$$
which also shows that the name ``Dirichlet-to-Neumann operator'' for $L$ is fully justified.
Also, $e^{-tL}u\to u$, as $t\to0$, uniformly and in
$L^p(\S^{n-1})$, for $1\leq p<\infty$.

By letting $r=e^{-t}$ in \eqref{Poisson bola},
we find that the semigroup $e^{-tL}$ admits an expression
as a convolution on the sphere with the Poisson kernel:
\begin{equation}\label{Poisson semigroup}
\begin{aligned}
e^{-tL}u(x) &=\int_{\S^{n-1}}\frac{1-e^{-2t}}{\omega_{n-1}
(1-2e^{-t}x\cdot y+e^{-2t})^{n/2}}u(y)\,d\mathcal{H}^{n-1}(y) \\
&= \int_{\S^{n-1}}\frac{1-e^{-2t}}{\omega_{n-1}((1-e^{-t})^2+2e^{-t}(1-x\cdot y))^{n/2}}
u(y)\,d\mathcal{H}^{n-1}(y) \\
&\equiv\int_{\S^{n-1}}P_{e^{-t}}(x\cdot y)u(y)\,d\mathcal{H}^{n-1}(y).
\end{aligned}
\end{equation}
 
\begin{lemma}\label{Prop:bound t}
There exists a constant $C>0$ depending only on
$u$ and $n$ such that $|e^{-tL}u(x)-u(x)|\leq Ct$, for any $0<t<1$, for all $x\in\S^{n-1}$.
\end{lemma}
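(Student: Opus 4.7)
The natural plan is to use the spectral representation of the semigroup $\{e^{-tL}\}_{t\geq 0}$ obtained in \eqref{Poisson sum} and compare it term by term with the series \eqref{decomp f} for $u$. Writing
$$e^{-tL}u(x)-u(x) = \sum_{k=1}^\infty \bigl(e^{-tk}-1\bigr)P_k u(x),$$
and using the elementary estimate $|e^{-tk}-1|\leq tk$ valid for all $t,k\geq 0$, the problem reduces to showing that
$$\sum_{k=1}^\infty k\,|P_k u(x)|\leq C,$$
with $C$ independent of $x\in\S^{n-1}$ (and depending only on $u$ and $n$).

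For the latter I would combine the two ingredients already established. First, by Cauchy--Schwarz together with the basis bound \eqref{acotacion esf arm} (applied with $\alpha=0$) and the fact that $d_k\sim k^{n-2}$, one controls $|P_k u(x)|$ pointwise by
$$|P_k u(x)|\leq \sum_{l=1}^{d_k}|c_{k,l}(u)||Y_{k,l}(x)|\leq C_n\, d_k\, k^{(n-2)/2}\max_{1\leq l\leq d_k}|c_{k,l}(u)|.$$
Second, since $u\in C^\infty(\S^{n-1})$, the coefficient decay estimate \eqref{eq:decaycoefficients} gives $|c_{k,l}(u)|\leq C_{u,m}/k^{2m}$ for every $m\in\Nat$, with a constant depending on $u$ and $m$. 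Choosing $m$ large enough (say $m>n$) makes the series $\sum_k k\cdot k^{n-2}\cdot k^{(n-2)/2}\cdot k^{-2m}$ convergent, uniformly in $x$. Interchanging the sum and the supremum is then legitimate because of the absolute convergence, and the desired bound $|e^{-tL}u(x)-u(x)|\leq Ct$ follows for all $0<t<1$ (in fact for all $t>0$).

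The only nontrivial step is the pointwise bound on $|P_k u(x)|$, and that is entirely driven by the smoothness of $u$ through \eqref{eq:decaycoefficients}. There is no real obstacle: once one picks $m$ large enough relative to $n$, everything converges and the bound is immediate. An alternative, essentially equivalent, route is to use the addition formula $\sum_{l=1}^{d_k}|Y_{k,l}(x)|^2=d_k/\omega_{n-1}$, which slightly improves the cosmetic form of the estimate but does not change the argument, since we still need the $m$-fold iteration of $-\Delta_{\S^{n-1}}$ to pay for the factor $k$ and the polynomial growth in $k$ coming from $d_k$ and the pointwise values of the spherical harmonics.
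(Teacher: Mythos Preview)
Your argument is correct and essentially matches the paper's proof: the paper applies the mean value theorem in $t$ to reduce to bounding $\sup_{t,x}|\partial_t e^{-tL}u(x)|$, and then invokes \eqref{acotacion esf arm} and \eqref{eq:decaycoefficients} exactly as you do. Your use of the elementary inequality $|e^{-tk}-1|\leq tk$ in place of the mean value theorem is a cosmetic difference only---differentiating the series term by term and bounding $\sum_k k e^{-\xi k}|P_ku(x)|$ leads to the same convergent series $\sum_k k|P_ku(x)|$ that you estimate.
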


\begin{proof}
By the mean value theorem,
$$|e^{-tL}u(x)-u(x)|=|e^{-tL}u(x)-e^{-0L}u(x)|=|\partial_te^{-tL}u(x)|\big|_{t=\xi}t,$$
where $\xi$ is an intermediate point between $0$ and $t$. But then, using \eqref{Poisson sum}
together with \eqref{acotacion esf arm} and \eqref{eq:decaycoefficients}, it readily follows
that $\sup_{t,x}|\partial_te^{-tL}u(x)|\leq C$.
\end{proof}

To relate $L$ with the Laplacian on the sphere we
recall that the eigenvalues of  $-\Delta_{\S^{n-1}}$ are $\lambda_k=k(k+n-2)$.
Observe that, unlike the case of the classical Poisson integral in the upper half-plane,
the Poisson integral in the ball \eqref{Poisson bola} is \textit{not} the Poisson semigroup
$\{e^{-t(-\Delta_{\S^{n-1}})^{1/2}}\}_{t\geq0}$ generated by $-\Delta_{\S^{n-1}}$. In fact,
$e^{-t(-\Delta_{\S^{n-1}})^{1/2}}\neq e^{-tL}$, except for the unit circle, that is, if $n=2$. Moreover,
$$-\Delta_{\S^{n-1}}=L(L+(n-2)\Id),$$
where $\Id$ is the identity operator. It then follows that
$$(-\Delta_{\S^{n-1}})^{\pm\s}=L^{\pm\s}(L+(n-2)\Id)^{\pm\s},\quad\s\in\C_+.$$

The explicit formula for the Poisson kernel in \eqref{Poisson semigroup}
comes from the generating formula for the Gegenbauer (or ultraspherical) polynomials.
The Funk--Hecke theorem (see \cite{Seeley}) states that if $F(\tau)(1-\tau^2)^{(n-3)/2}$
is an integrable function on the interval $(-1,1)$ then,
for each spherical harmonic $Y_{k,l}$ and $x\in\S^{n-1}$,
\begin{equation}\label{Funk--Hecke}
\begin{aligned}
\int_{\S^{n-1}}F(x\cdot y)Y_{k,l}(y)&\,d\mathcal{H}^{n-1}(y)\\
&=Y_{k,l}(x)\frac{\omega_{n-2}}{C_k(1)}\int_{-1}^1F(\tau)C_k(\tau)(1-\tau^2)^{(n-3)/2}\,d\tau,
\end{aligned}
\end{equation}
where $C_k(\tau)$ is the Gegenbauer
polynomial $C_k^\nu(\tau)$ with parameter $\nu=(n-2)/2$. For $n=2$ they are the
Chebyshev polynomials and for $n=3$ they are the Legendre polynomials
(see \cite{Folland,Lebedev,online}).
Using the generating formula
$$(1-2\tau r+r^2)^{-\nu}=\sum_{k=0}^\infty C_k^\nu(\tau)r^k,$$
see \cite[eq.~(5.12.7)]{Lebedev}, it easily follows that
\begin{equation}\label{generating-formula}
\sum_{k=0}^\infty\frac{k+\nu}{\nu}\,C^\nu_k(\tau)r^k=\frac{1-r^2}{(1-2r\tau+r^2)^{\nu+1}}.
\end{equation}
For each fixed $x\in\S^{n-1}$, $C_k(x\cdot y)$ (as a function of $y\in\S^{n-1}$)
is in $SH^k$, see \cite{Folland}. By expressing $C_k(x\cdot y)$ in terms of the orthonormal basis
$\{Y_{k,l}:l=1,\ldots,d_k\}$ of $SH^k$, and applying 
the Funk--Hecke formula \eqref{Funk--Hecke} in combination with
properties of Gegenbauer polynomials and the Gamma function, we can see that
\begin{equation}\label{eq:super identity}
\frac{1}{\omega_{n-1}}\cdot\frac{k+\frac{n-2}{2}}{\frac{n-2}{2}}\,C_k(x\cdot y)=\sum_{l=1}^{d_k}Y_{k,l}(x)Y_{k,l}(y).
\end{equation}
Plugging this into the first sum in \eqref{Poisson sum} we get
$$e^{-tL}u(x)=\int_{\S^{n-1}}\frac{1}{\omega_{n-1}}\bigg(\sum_{k=0}^\infty
\frac{k+\frac{n-2}{2}}{\frac{n-2}{2}}\,C_k(x\cdot y)r^k\bigg)u(y)\,d\mathcal{H}^{n-1}(y).$$
Thus \eqref{Poisson semigroup} with $e^{-t}=r$ follows by using the generating formula \eqref{generating-formula}
with $\nu=\frac{n-2}{2}$.

\section{Negative powers and the Minakshisundaram
zeta function}\label{Section:Negative}

Throughout this section we always assume that $\displaystyle\int_{\S^{n-1}}u=0$.
Recall that we have set $s=\mathrm{Re}(\s)>0$ and that the eigenvalues of $-\Delta_{\S^{n-1}}$
are $\lambda_k=k(k+n-2)$, $k\geq0$.
By using \eqref{eq:super identity} into \eqref{spectral-formula}
we find that the negative powers of the Laplacian on the sphere 
have an integral representation as an spherical convolution
\begin{equation}\label{integral-formula}
(-\Delta_{\S^{n-1}})^{-\s}u(x)=\int_{\S^{n-1}}K_{-\s}(x\cdot y)u(y)\,d\mathcal{H}^{n-1}(y),
\end{equation}
where the kernel is given in terms of the Gegenbauer polynomials as
\begin{equation}\label{MS-zeta}
K_{-\s}(x\cdot y)=\frac{1}{\omega_{n-1}}
\sum_{k=1}^\infty\lambda_k^{-\s}\cdot\frac{k+\frac{n-2}{2}}{\frac{n-2}{2}}\,C_k(x\cdot y).
\end{equation}

The series in \eqref{MS-zeta} is the zeta function on the sphere of S. Minakshisundaram,
see \cite{Minakshisundaram}\footnote{We have a slightly different normalization of Gegenbauer polynomials
with respect to Minakshisundaram. Compare \eqref{generating-formula} with
\cite[Lemma~1]{Minakshisundaram}. Nevertheless, the zeta functions coincide up to the
normalizing factor $\omega_{n-1}$.}.
In \cite{Minakshisundaram} Minakshisundaram analyzed this function with methods similar to those used
in analytic number theory for studying the Riemann zeta function.
Using that $|C_k(x\cdot y)|\leq |C_k(1)|={{k+n-3}\choose{k}}$ we get
$$C_k(x\cdot y)=O(k^{n-3}).$$
From here it is easy to see that the Dirichlet series \eqref{MS-zeta}
converges absolutely and uniformly in compact sets
when $s=\hbox{Re}(\s)>\frac{n-1}{2}$. For this range of $s$, $K_{-\s}(x\cdot y)$
represents an analytic function of $\s$.
However, Minakshisundaram showed that (as it happens with Riemann zeta function) the function
$K_{-\s}(x\cdot y)$ can be continued as a meromorphic function of $\s$
to the whole complex plane. In order to do so, he proved the following
integral representation of his zeta function
\begin{equation}\label{zeta-integral-original}
\begin{aligned}
K_{-\s}&(x\cdot y)=\frac{1}{\omega_{n-1}}\bigg(\frac{2}{\nu}\bigg)^{\s-1/2}\frac{\Gamma\big(\s+\frac{1}{2}\big)}{\Gamma(2\s)} \\
&\quad\times\int_0^\infty\bigg(\frac{1-e^{-2t}}{(1-2e^{-t}x\cdot y+e^{-2t})^{\nu+1}}-1\bigg)
\mathcal{I}_{\s-1/2}(\nu t)e^{-\nu t}\frac{dt}{t^{1/2-\s}},
\end{aligned}
\end{equation}
where $\nu=\frac{n-2}{2}$ and $\mathcal{I}_{\s-1/2}$ is the modified Bessel function of the first kind
\begin{equation}\label{Bessel-function}
\mathcal{I}_\rho(z)=\sum_{m=0}^\infty\frac{(z/2)^{2m+\rho}}{\Gamma(m+1)\Gamma(m+\rho+1)}.
\end{equation}
Recall that $\mathcal{I}_\rho(z)$ is an analytic function of $z\in\C\setminus(-\infty,0)$
and an entire function of $\rho$. Moreover, the series in \eqref{Bessel-function} is uniformly convergent in any disk $\abs{z}<R$, $\abs{\rho}<N$. See \cite[Chapter~5]{Lebedev} and  \cite{online} for details.
It is easy to see that the integral \eqref{zeta-integral-original} defines an analytic function of $\s$
in the half plane $\hbox{Re}(\s)>0$. Then, Minakshisundaram obtained the meromorphic
continuation of his zeta function by performing the usual trick of replacing
the domain of integration in \eqref{zeta-integral-original} by a loop integral
around the real axis, with a small circle around the origin.

Thanks to Minakshisundaram's integral formula \eqref{zeta-integral-original} we are able
to estimate the integral kernel in the pointwise formula \eqref{integral-formula}.
For the sake of simplicity and because of our interest on fractional nonlocal equations,
we will only consider the case when $\s$ is real, namely,
$\s=s>0$. Recall the semigroup kernel $P_{e^{-t}}$ in \eqref{Poisson semigroup}.

\begin{theorem}[Fractional integrals on the sphere]\label{thm:negative powers any n}
Let $s>0$. Then
$$(-\Delta_{\S^{n-1}})^{-s}u(x)=\int_{\S^{n-1}}K_{-s}(x\cdot y)u(y)\,d\mathcal{H}^{n-1}(y),$$
for $x\in\S^{n-1}$. The kernel $K_{-s}(x\cdot y)$ is given by
\begin{equation}\label{zeta-integral}
\begin{aligned}
K_{-s}(x\cdot y)&=\bigg(\frac{4}{n-2}\bigg)^{s-1/2}\frac{\Gamma\big(s+\frac{1}{2}\big)}{\Gamma(2s)} \\
&\quad\times\int_0^\infty e^{-t(\frac{n-2}{2})}
\big(P_{e^{-t}}(x\cdot y)-\tfrac{1}{\omega_{n-1}}\big)\mathcal{I}_{s-1/2}(\tfrac{n-2}{2}t)\,\frac{dt}{t^{1/2-s}},
\end{aligned}
\end{equation}
and satisfies the estimates
$$|K_{-s}(x\cdot y)|\leq c_{n,s}
\begin{cases}
    1/d(x,y)^{(n-1)-2s}, & \hbox{if}~s<\frac{n-1}{2}, \\
    \ln(1+(1-x\cdot y)^{-n/2}), & \hbox{if}~s=\frac{n-1}{2}, \\
    1, & \hbox{if}~s>\frac{n-1}{2},
\end{cases}$$
for all $x,y\in\S^{n-1}$, $x\neq y$.
\end{theorem}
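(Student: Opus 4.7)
The plan is to proceed in two main parts: first derive the integral representation \eqref{zeta-integral} from Minakshisundaram's formula \eqref{zeta-integral-original}, and then obtain the three-regime pointwise bounds on the kernel through asymptotic analysis.

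For the representation, I would set $\nu=(n-2)/2$, so that $\nu+1=n/2$, and observe by direct inspection of \eqref{Poisson semigroup} that the bracketed rational expression in \eqref{zeta-integral-original},
\[ \frac{1-e^{-2t}}{(1-2e^{-t}x\cdot y+e^{-2t})^{n/2}}, \]
equals exactly $\omega_{n-1}\,P_{e^{-t}}(x\cdot y)$. Substituting this identity and rewriting $(2/\nu)^{s-1/2}=(4/(n-2))^{s-1/2}$ converts \eqref{zeta-integral-original} into \eqref{zeta-integral} at once. Absolute convergence of the resulting integral for $x\neq y$ is established by the estimates below, so no separate argument is needed.

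For the pointwise bounds, I would write $\delta=d(x,y)$, use $1-x\cdot y\sim\delta^2$, and split the integral at $t=1$. On $(1,\infty)$ the large-argument asymptotic $\mathcal{I}_{s-1/2}(z)\sim e^z/\sqrt{2\pi z}$ cancels the $e^{-(n-2)t/2}$ factor, while a Taylor expansion of the Poisson kernel in $r=e^{-t}\to 0$ gives $|P_{e^{-t}}(x\cdot y)-1/\omega_{n-1}|=O(e^{-t})$, so this tail contributes an amount bounded uniformly in $x,y$. On $(0,1)$ the series \eqref{Bessel-function} yields $\mathcal{I}_{s-1/2}(\tfrac{n-2}{2}t)\sim c_{n,s}\,t^{s-1/2}$, so the integrand is of order $t^{2s-1}|P_{e^{-t}}(x\cdot y)-1/\omega_{n-1}|$, and the elementary bound $P_{e^{-t}}(x\cdot y)\lesssim t/(t^2+\delta^2)^{n/2}$, which follows from $(1-e^{-t})^2\sim t^2$ and $1-e^{-2t}\sim 2t$ for small $t$, controls the Poisson kernel.

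The three cases then emerge from splitting $(0,1)$ further at $t=\delta$. For $t\leq\delta$ the denominator is $\sim\delta^n$, producing $\int_0^\delta t^{2s}/\delta^n\,dt\sim\delta^{2s+1-n}$; for $t\in(\delta,1)$ the denominator is $\sim t^n$, so I am left with $\int_\delta^1 t^{2s-n}\,dt$, which evaluates to order $\delta^{2s+1-n}$ when $s<(n-1)/2$, to order $\ln(1/\delta)\sim\ln(1+(1-x\cdot y)^{-n/2})$ precisely at $s=(n-1)/2$ (where $2s-n=-1$), and to a bounded quantity for $s>(n-1)/2$. The additive $1/\omega_{n-1}$ contribution is always $\int_0^1 t^{2s-1}\,dt<\infty$, hence harmless. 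The principal obstacle I anticipate is the clean bookkeeping of these scales at the transition $t=\delta$, most delicately at the critical exponent $s=(n-1)/2$ where the logarithm arises; the remainder is routine asymptotic analysis combining the standard small- and large-argument behaviour of $\mathcal{I}_{s-1/2}$ with the explicit form of the Poisson kernel on the ball.
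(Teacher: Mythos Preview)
Your proposal is correct and follows essentially the same strategy as the paper: split the integral in \eqref{zeta-integral} at $t=1$, use the large-$z$ asymptotic of $\mathcal{I}_{s-1/2}$ together with the bound $|P_{e^{-t}}(x\cdot y)-1/\omega_{n-1}|\leq c_n e^{-t}$ (obtained via the mean value theorem in $r=e^{-t}$) to handle the tail, and on $(0,1)$ use the small-$z$ asymptotic of $\mathcal{I}_{s-1/2}$ to reduce matters to the model integral $\int_0^1 t^{2s}(t^2+(1-x\cdot y))^{-n/2}\,dt$. The only cosmetic difference is that the paper evaluates this model integral via the scaling substitution $\omega=t/(1-x\cdot y)^{1/2}$, whereas you perform an equivalent dyadic split at $t=\delta$; both yield the same trichotomy.
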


\begin{proof}
We split the integral in \eqref{zeta-integral} as the sum of two integrals $I+II$, where
$$I=\int_0^1e^{-t(\frac{n-2}{2})}\big(P_{e^{-t}}(x\cdot y)-\tfrac{1}{\omega_{n-1}}\big)\mathcal{I}_{s-1/2}
(\tfrac{n-2}{2}t)\,\frac{dt}{t^{1/2-s}}.$$

Let us estimate $I$. By using the asymptotic behavior
$\mathcal{I}_\rho(z)\approx\frac{1}{\Gamma(\rho+1)}(\frac{1}{2}z)^\rho$, valid for $z\to0$ and
any $\rho\neq-1,-2,\ldots$ (see \cite{Lebedev,online}) we see that
$$\int_0^1 e^{-t(\frac{n-2}{2})}\tfrac{1}{\omega_{n-1}}\mathcal{I}_{s-1/2}(\tfrac{n-2}{2}t)\frac{dt}{t^{1/2-s}}
\sim\int_0^1t^{2s-1}\,dt\sim 1.$$
On the other hand,
\begin{align*}
    \int_0^1 &e^{-t(\frac{n-2}{2})}P_{e^{-t}}(x\cdot y)\mathcal{I}_{s-1/2}(\tfrac{n-2}{2}t)\,\frac{dt}{t^{1/2-s}} \\
     &\sim \int_0^1\frac{1-e^{-2t}}{((1-e^{-t})^2+2e^{-t}(1-x\cdot y))^{n/2}}\,t^{2s-1}\,dt \\
     &\sim \int_0^1\frac{t^{2s}}{(t^2+(1-x\cdot y))^{n/2}}\,dt=:I_1.
\end{align*}
If $s<\frac{n-1}{2}$ then
\begin{align*}
     I_1 &= \frac{1}{(1-x\cdot y)^{n/2}}\int_0^1\frac{t^{2s}}{(\frac{t^2}{1-t}+1)^{n/2}}\,dt \\
     &= \frac{1}{(1-x\cdot y)^{\frac{(n-1)-2s}{2}}}\int_0^{\frac{1}{(1-x\cdot y)^{1/2}}}
     \frac{\omega^{2s}}{(\omega^2+1)^{n/2}}\,d\omega \\
     &\sim \frac{1}{d(x,y)^{(n-1)-2s}}.
\end{align*}
In the case $s=\frac{n-1}{2}$ we notice that
\begin{align*}
    I_1 &= \int_0^1\frac{t^{n-1}}{(t^2+(1-x\cdot y))^{n/2}}\,dt \\
     &\sim \int_0^1\frac{t^{n-1}}{t^n+(1-x\cdot y)^{n/2}}\,dt \\
     &= \frac{1}{n}\ln(1+(1-x\cdot y)^{-n/2}).
\end{align*}
Finally, if $s>\frac{n-1}{2}$ then $2s=n-1+\varepsilon$ for some $\varepsilon>0$ and
$$I_1=\int_0^1\frac{t^{n-1+\varepsilon}}{(t^2+(1-x\cdot y))^{n/2}}\,dt
\sim \int_0^1\frac{t^{n-1+\varepsilon}}{t^n+(1-x\cdot y)^{n/2}}\,dt\sim 1,$$
where the last estimate can be checked by considering the cases $1-x\cdot y<1$ and $1\leq1-x\cdot y<2$.
We conclude that $I$ satisfies estimates as in the statement.

Consider now $II$. For $1<t<\infty$ we have $0<r=e^{-t}<e^{-1}<1$.
We then need to estimate $|P_r(x\cdot y)-\tfrac{1}{\omega_{n-1}}|$ uniformly in $x\cdot y\in[-1,1)$,
for every $0<r<e^{-1}$. Fix any such $r$.
For each $x\cdot y\in[-1,1)$, by the mean value theorem,
$\big|P_{r}(x\cdot y)-\tfrac{1}{\omega_{n-1}}\big|=\abs{P_{r}(x\cdot y)-P_0(x\cdot y)}
=\abs{\partial_rP_r(x\cdot y)}\big|_{r=\xi}r$, for some $\xi$ between $0$ and $r$.
We have
$$\partial_rP_r(x\cdot y)\big|_{r=\xi}
= \frac{-2r+(x\cdot y-r)[4r^2+n(1-r^2)]+2r^3}{\omega_{n-1}((1-r)^2+2r(1-x\cdot y))^{n/2+1}}\Big|_{r=\xi}.$$
It is clear that $|\partial_rP_r(x\cdot y)|\big|_{r=\xi}\leq c_n$,
uniformly in $x\cdot y$. Hence, by going back to $r=e^{-t}$,
we get $|P_{e^{-t}}(x\cdot y)-\frac{1}{\omega_{n-1}}|\leq c_n e^{-t}$, uniformly in $x\cdot y$.
This and the asymptotic expansion
$\mathcal{I}_\rho(z)\approx\frac{1}{(2\pi z)^{1/2}}e^z(1+O(z^{-1}))$ as $|z|\to\infty$
(see \cite{online, Lebedev}) imply
$$II\leq c_{n,s}\int_1^\infty e^{-t(\frac{n-2}{2})}e^{-t}\frac{e^{t(\frac{n-2}{2})}}{t^{1/2}}\,\frac{dt}{t^{1/2-s}}=c_{n,s}.$$
By pasting together the estimates for $I$ and $II$ the conclusions follow.
\end{proof}

\section{Positive powers and the Dirichlet-to-Neumann map}\label{Section:Positive}

In this section we let $\s=s$ to be real, with $0<s<1$. Recall that
\begin{equation}\label{Lap frac serie}
\begin{aligned}
(-\Delta_{\S^{n-1}})^s u(x) &=\sum_{k=0}^\infty\lambda_k^sP_ku(x) \\
&=\sum_{k=0}^\infty(k(k+n-2))^s\sum_{l=1}^{d_k}c_{k,l}(u)Y_{k,l}(x).
\end{aligned}
\end{equation}
If $u\in C^\infty(\S^{n-1})$ then  \eqref{acotacion esf arm} and
\eqref{eq:decaycoefficients} imply that the sums in \eqref{Lap frac serie} are absolutely convergent
and can be differentiated term by term, so $(-\Delta_{\S^{n-1}})^su\in C^\infty(\S^{n-1})$.

Our aim is to prove a numerical identity (Lemma \ref{Lem:Minak positive})
that will permit us to express the operator $(-\Delta_{\S^{n-1}})^s$
in terms of its principal part, namely, the fractional power of the Dirichlet-to-Neumann
map $(L+\frac{n-2}{2})^{2s}$, plus a remainder, smoothing operator $S^s$. Since $L$
is an operator of order one, $(L+\frac{n-2}{2})^{2s}$ is an operator of order $2s$,
which is also the order of $(-\Delta_{\S^{n-1}})^s$. The remainder operator $S^s$
will act as a fractional integral operator of order $2-2s$. The semigroup $e^{-tL}$ generated
by $L$ (see section \ref{Section:Tools}) will play a key role throughout the analysis.

Let us begin with our new numerical identity. Here we were
inspired by the work of Minakshisundaram. Indeed, he proved
a similar formula but for negative values of $s$
in \cite[Lemma~2]{Minakshisundaram},
which allowed him in turn to find \eqref{zeta-integral-original}.

\begin{lemma}\label{Lem:Minak positive}
Let $k\geq0$, $\nu\geq0$, with $k+\nu>0$, and $0<s<1$. Then
$$(k(k+2\nu))^s=(k+\nu)^{2s}+\frac{(2\nu)^{s+1/2}}{\pi^{-1/2}\Gamma(-s)}
\int_0^\infty e^{-t(k+\nu)}\mathcal{B}_{-s-1/2}(\nu t)\,\frac{dt}{t^{1/2+s}}.$$
Here $\mathcal{B}_\rho(z)$ is the modified Bessel function of the first kind $\mathcal{I}_\rho(z)$
minus the first term of its Taylor series expansion, namely,
$$\mathcal{B}_\rho(z)=\mathcal{I}_\rho(z)-\frac{(z/2)^\rho}{\Gamma(\rho+1)}
=\sum_{m=1}^\infty\frac{(z/2)^{2m+\rho}}{\Gamma(m+1)\Gamma(m+\rho+1)}.$$
\end{lemma}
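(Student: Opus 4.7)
The proof reduces to an analytic identity in two real parameters. Setting $a = k+\nu$ and $b = \nu$, the relation $k(k+2\nu) = a^2 - b^2$ shows the claim is equivalent to establishing
\begin{equation*}
(a^2-b^2)^s = a^{2s} + \frac{(2b)^{s+1/2}\sqrt{\pi}}{\Gamma(-s)}\int_0^\infty e^{-at}\mathcal{B}_{-s-1/2}(bt)\,\frac{dt}{t^{1/2+s}},
\end{equation*}
for $a \geq b > 0$ and $0 < s < 1$ (the case $b=0$, i.e.\ $\nu=0$, being trivial since both sides collapse to $a^{2s}$). My plan is to evaluate the right-hand integral directly by inserting the series definition of $\mathcal{B}_{-s-1/2}$ and recognizing the resulting power series as the binomial expansion of $(1-b^2/a^2)^s$.

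First I would verify that sum and integral may be interchanged. All terms of
\begin{equation*}
\mathcal{B}_{-s-1/2}(bt) = \sum_{m=1}^\infty\frac{(bt/2)^{2m-s-1/2}}{m!\,\Gamma(m-s+1/2)}
\end{equation*}
are positive, and the integrand behaves like $t^{1-2s}$ at the origin (from the $m=1$ term, integrable since $s<1$) and like $e^{-(a-b)t}t^{-1-s}$ at infinity — in fact like $t^{-1-s}$ in the borderline case $a=b$, using the standard large-$z$ asymptotics $\mathcal{I}_\rho(z)\sim e^z/\sqrt{2\pi z}$. Tonelli therefore applies, and each term reduces to the elementary moment $\int_0^\infty e^{-at}t^{2m-2s-1}\,dt = \Gamma(2m-2s)/a^{2m-2s}$.

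Next I would apply the Legendre duplication formula
\begin{equation*}
\Gamma(2m-2s) = \frac{2^{2m-2s-1}}{\sqrt{\pi}}\,\Gamma(m-s)\,\Gamma(m-s+1/2),
\end{equation*}
which is designed precisely to cancel the awkward $\Gamma(m-s+1/2)$ in the denominator. After collecting powers of $2$, $a$, and $b$, a direct calculation yields
\begin{equation*}
\int_0^\infty e^{-at}\mathcal{B}_{-s-1/2}(bt)\,\frac{dt}{t^{1/2+s}} = \frac{(2b)^{-s-1/2}}{\sqrt{\pi}}\,a^{2s}\sum_{m=1}^\infty\frac{\Gamma(m-s)}{m!}\left(\frac{b^2}{a^2}\right)^m.
\end{equation*}
The final step is to identify this tail. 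The binomial expansion gives $\sum_{m\geq 0}\frac{\Gamma(m-s)}{m!}x^m = \Gamma(-s)(1-x)^s$ for $|x|\leq 1$ (valid at $x=1$ by Abel's theorem since $0<s<1$), so subtracting the $m=0$ term $\Gamma(-s)$ and substituting $x=b^2/a^2$ produces exactly $\Gamma(-s)\bigl[(a^2-b^2)^s - a^{2s}\bigr]/a^{2s}$. Rearranging yields the identity.

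The main obstacle is bookkeeping rather than conceptual: the likely source of error is tracking signs and normalizations of $\Gamma(-s)$, the powers of $2$ and $\sqrt{\pi}$, and the indices in the Bessel series. The only genuine subtlety is the borderline case $k=0$ (so $a=b$), which lies on the circle of convergence of the binomial series; it is handled by Abel continuity and consistent with the computation above, both sides reducing to $-\nu^{2s}$.
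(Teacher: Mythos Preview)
Your proof is correct and uses essentially the same ingredients as the paper's argument---the binomial series for $(1-x)^s$, the moment integral $\int_0^\infty e^{-at}t^{\alpha-1}\,dt=\Gamma(\alpha)/a^\alpha$, and the Legendre duplication formula---simply run in the opposite direction (you start from the integral and arrive at the binomial expansion, whereas the paper starts from the expansion and builds the integral). Your explicit treatment of the borderline case $k=0$ via Abel continuity is a small improvement: the paper's proof tacitly assumes $|\tau|<1$, i.e.\ $k>0$.
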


\begin{proof}
The following estimates, direct consequences of the series above and the
asymptotic formula for $\mathcal{I}_\rho(z)$ for large $|z|$, hold:
\begin{equation}\label{asintoticas B}
|\mathcal{B}_{-s-1/2}(\nu t)|\leq c_s
\begin{cases}
\displaystyle(\nu t)^{2-s-1/2},&\hbox{as}~t\to0,\\
\displaystyle\frac{e^{\nu t}}{(\nu t)^{1/2}},&\hbox{as}~t\to\infty.
\end{cases}
\end{equation}
These show that the integral in the statement is absolutely convergent.
Let $|\tau|<1$.
From the binomial theorem and the symmetry formula for the quotient
of Gamma functions we get
\begin{align*}
(1-\tau)^s &=\sum_{m=0}^\infty (-1)^m\frac{\Gamma(s+1)}{\Gamma(m+1)\Gamma(s-m+1)}\,\tau^m \\
&=1+\sum_{m=1}^\infty\frac{\Gamma(m-s)}{\Gamma(m+1)\Gamma(-s)}\,\tau^m
\end{align*}
For any $\lambda,\alpha>0$ it is easy to check that the following identity holds:
$$\lambda^{-\alpha}=\frac{1}{\Gamma(\alpha)}\int_0^\infty e^{-t\lambda}\,\frac{dt}{t^{1-\alpha}}.$$
Let us take $\tau=\frac{\nu^2}{(k+\nu)^2}$, $\lambda=k+\nu$, $\alpha=2(m-s)$,
and apply the duplication formula for the Gamma function (the second identity in \eqref{Gamma})
with $z=m-s$ to obtain
\begin{align*}
    (k(k+2\nu))^s &= (k+\nu)^{2s}\left(1-\frac{\nu^2}{(k+\nu)^2}\right)^s \\
     &= (k+\nu)^{2s}+\sum_{m=1}^\infty\frac{\Gamma(m-s)}{\Gamma(m+1)\Gamma(-s)}\nu^{2m}(k+\nu)^{-2(m-s)} \\
     &= (k+\nu)^{2s} \\
     &\quad+\sum_{m=1}^\infty\frac{\Gamma(m-s)\nu^{2m}}{\Gamma(m+1)\Gamma(-s)\Gamma(2(m-s))}
     \int_0^\infty e^{-t(k+\nu)}\frac{dt}{t^{1-2(m-s)}} \\
     &= (k+\nu)^{2s} \\
     &\quad+\frac{(2\nu)^{s+1/2}}{\pi^{-1/2}\Gamma(-s)}\int_0^\infty e^{-t(k+\nu)}
     \bigg[\sum_{m=1}^\infty\frac{(\nu t/2)^{2m+(-s-1/2)}}{\Gamma(m+1)\Gamma(m-s+1/2)}\bigg]\frac{dt}{t^{1/2+s}}.
\end{align*}
The sum inside the brackets is exactly $\mathcal{B}_{-s-1/2}(\nu t)$.
\end{proof}

Let $u$ be as in \eqref{decomp f}, see also \eqref{projector}.
The fractional power of order $2s$ of the Dirichlet-to-Neumann operator
$L+\tfrac{n-2}{2}$ is given by
\begin{equation}\label{eq:fractional L}
\begin{aligned}
(L+\tfrac{n-2}{2})^{2s}u(x) &= \sum_{k=0}^\infty(k+\tfrac{n-2}{2})^{2s}P_ku(x) \\
&=\sum_{k=0}^\infty(k+\tfrac{n-2}{2})^{2s}\sum_{l=1}^{d_k}c_{k,l}(u)Y_{k,l}(x).
\end{aligned}
\end{equation}

By applying the numerical identity from
Lemma \ref{Lem:Minak positive} with $\nu=\tfrac{n-2}{2}$ to
the spectral definition of $(-\Delta_{\S^{n-1}})^s u(x)$ in \eqref{Lap frac serie}
and recalling the definition of $e^{-tL}u(x)$ in \eqref{Poisson sum},
we finally obtain the desired relation between the fractional Laplacian
on the sphere and the fractional Dirichlet-to-Neumann map \eqref{eq:fractional L}.

\begin{theorem}[Fractional Laplacian on the sphere and fractional Dirichlet-to-Neumann map]\label{Thm:Frac Lap semigroup}
Let $0<s<1$. Then
$$(-\Delta_{\S^{n-1}})^su(x)=(L+\tfrac{n-2}{2})^{2s}u(x)+S^su(x),$$
for $x\in\S^{n-1}$. The operator $S^s$ is given by
$$S^su(x)=\frac{(n-2)^{s+1/2}}{\pi^{-1/2}\Gamma(-s)}\int_0^\infty
e^{-t(\frac{n-2}{2})}e^{-tL}u(x)\mathcal{B}_{-s-1/2}(\tfrac{n-2}{2}t)\,\frac{dt}{t^{1/2+s}}.$$
\end{theorem}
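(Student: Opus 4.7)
The plan is to apply Lemma~\ref{Lem:Minak positive} termwise with $\nu = \tfrac{n-2}{2}$ to the spectral series \eqref{Lap frac serie}. For $u \in C^\infty(\S^{n-1})$, the estimates \eqref{acotacion esf arm} and \eqref{eq:decaycoefficients} guarantee that $|P_k u(x)|$ decays faster than any polynomial in $k$, uniformly in $x \in \S^{n-1}$, so both \eqref{Lap frac serie} and the series \eqref{eq:fractional L} defining $(L+\tfrac{n-2}{2})^{2s} u(x)$ converge absolutely. Lemma~\ref{Lem:Minak positive} applied to $\lambda_k^s = (k(k+n-2))^s$ yields, whenever $k + \tfrac{n-2}{2} > 0$,
\[
\lambda_k^s P_k u(x) = (k+\tfrac{n-2}{2})^{2s} P_k u(x) + \frac{(n-2)^{s+1/2}}{\pi^{-1/2}\Gamma(-s)}\, P_k u(x) \int_0^\infty e^{-t(k+(n-2)/2)} \mathcal{B}_{-s-1/2}(\tfrac{n-2}{2}t)\, \frac{dt}{t^{1/2+s}}.
\]

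Summing over $k$, the first family of terms assembles into $(L+\tfrac{n-2}{2})^{2s} u(x)$ by \eqref{eq:fractional L}. For the second family, I would interchange $\sum_k$ with $\int_0^\infty$, pull the factor $e^{-tk}$ inside the sum, and use \eqref{Poisson sum} to recognize $\sum_{k=0}^\infty e^{-tk} P_k u(x) = e^{-tL} u(x)$. This reproduces exactly the integral defining $S^s u(x)$ in the statement, completing the algebraic part of the identity.

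The main obstacle is justifying the Fubini swap. Combining \eqref{asintoticas B} with the factor $t^{-1/2-s}$, the modulus of the integrand is $\lesssim t^{1-2s}$ near $t=0$ (integrable since $0<s<1$) and $\lesssim e^{-tk}\, t^{-1-s}$ as $t \to \infty$ (integrable for every $k\geq 0$); in particular, the $t$-integral is bounded uniformly in $k$. Together with the rapid decay of $|P_k u(x)|$, this yields
\[
\sum_{k=0}^\infty |P_k u(x)| \int_0^\infty e^{-t(k+\nu)} |\mathcal{B}_{-s-1/2}(\nu t)|\, \frac{dt}{t^{1/2+s}} < \infty
\]
uniformly in $x$, so Tonelli's theorem legitimizes the interchange and Fubini then allows the pointwise recombination. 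The only case not covered directly by Lemma~\ref{Lem:Minak positive} is $n=2$ with $k=0$ (where $k+\nu=0$), but there $P_0 u$ is a constant, both $(-\Delta_{\S^{n-1}})^s$ and $(L+\tfrac{n-2}{2})^{2s}$ annihilate it, and the prefactor $(n-2)^{s+1/2}$ in $S^s$ vanishes, so the identity holds trivially on this component.
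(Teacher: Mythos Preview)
Your proof is correct and follows exactly the approach the paper takes: apply Lemma~\ref{Lem:Minak positive} termwise with $\nu=\tfrac{n-2}{2}$ to the spectral series \eqref{Lap frac serie}, then recognize $\sum_k e^{-tk}P_ku(x)=e^{-tL}u(x)$ via \eqref{Poisson sum}. You supply more detail than the paper does (in particular the Fubini justification using \eqref{asintoticas B} and the $n=2$, $k=0$ edge case), but the argument is the same.
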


We pointed out in section \ref{Section:Tools} that $-\Delta_{\S^{n-1}}=L$ only when $n=2$.
Notice that if we let $n=2$ in Theorem \ref{Thm:Frac Lap semigroup} then
$S^su=0$ and $(-\Delta_{\T})^su=L^{2s}u$. The fractional Laplacian on the torus,
which includes this case $n=2$, has been extensively studied in \cite{Roncal-Stinga1,Roncal-Stinga2}.
Hence for the rest of this section we will focus on the case $n\geq3$.
We see from Theorem \ref{Thm:Frac Lap semigroup} that in order to 
understand the fractional Laplacian on the sphere in terms of the
fractional powers of the Dirichlet-to-Neumann map, 
we need to study separately $(L+\tfrac{n-2}{2})^{2s}u$ and the fractional integral operator $S^su$.

\subsection{The fractional Dirichlet-to-Neumann operator $(L+\frac{n-2}{2})^{2s}$}

As it was expected, the regularization effects of $(-\Delta_{\S^{n-1}})^s$ are given by
the fractional operator $(L+\tfrac{n-2}{2})^{2s}$. Since $L$ is an operator of order one,
it is enough to restrict our analysis to the case of powers $2s\in(0,1)$,
in which $(L+\frac{n-2}{2})^{2s}$ becomes an operator of order $2s$.
When $2s\in[1,2)$ we can just simply write
$$(L+\tfrac{n-2}{2})^{2s}u=(L+\tfrac{n-2}{2})^{2s-1}(Lu+\tfrac{n-2}{2}u),$$
and $2s-1\in[0,1)$.

\begin{theorem}[Fractional Dirichlet-to-Neumann map]\label{thm:D-t-N}
Let $0<2s<1$. Then
$$(L+\tfrac{n-2}{2})^{2s}u(x)=\int_{\S^{n-1}}(u(x)-u(y))L_{2s}(x\cdot y)\,d\mathcal{H}^{n-1}(y)
+\big(\tfrac{n-2}{2}\big)^{2s}u(x),$$
for $x\in\S^{n-1}$, where the integral is absolutely convergent.
The kernel $L_{2s}(x\cdot y)$ is given by
$$L_{2s}(x\cdot y)=\frac{1}{|\Gamma(-2s)|}\int_0^\infty
e^{-t(\frac{n-2}{2})}P_{e^{-t}}(x\cdot y)\,\frac{dt}{t^{1+2s}},$$
and satisfies the estimate
$$L_{2s}(x\cdot y)\sim\frac{1}{d(x,y)^{(n-1)+2s}},$$
for all $x,y\in\S^{n-1}$, $x\neq y$.
\end{theorem}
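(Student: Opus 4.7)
The plan is to apply the semigroup (Balakrishnan) formula for fractional powers to the positive operator $A=L+\tfrac{n-2}{2}$. For $n\geq 3$ the spectrum of $A$ on $L^2(\S^{n-1})$ is $\{k+\tfrac{n-2}{2}\}_{k\geq0}$, bounded below by $\tfrac{n-2}{2}>0$, so for smooth $u$ and $0<2s<1$ I may write
$$(L+\tfrac{n-2}{2})^{2s}u(x)=\frac{1}{\Gamma(-2s)}\int_0^\infty\bigl(e^{-t(L+(n-2)/2)}u(x)-u(x)\bigr)\,\frac{dt}{t^{1+2s}},$$
where convergence at $t=0$ is provided by Lemma \ref{Prop:bound t} (combined with the bound $|e^{-t(n-2)/2}-1|\leq Ct$) and at $t=\infty$ by the boundedness of the integrand times $t^{-1-2s}$. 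I then factor $e^{-t(L+(n-2)/2)}=e^{-t(n-2)/2}e^{-tL}$ and use the Poisson convolution representation \eqref{Poisson semigroup}.

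\smallskip

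The identity $\int_{\S^{n-1}}P_{e^{-t}}(x\cdot y)\,d\mathcal{H}^{n-1}(y)=1$ allows me to split the integrand by adding and subtracting $e^{-t(n-2)/2}u(x)$:
\begin{align*}
e^{-t(L+\frac{n-2}{2})}u(x)-u(x)&=e^{-t(n-2)/2}\!\int_{\S^{n-1}}\!P_{e^{-t}}(x\cdot y)(u(y)-u(x))\,d\mathcal{H}^{n-1}(y)\\
&\quad+(e^{-t(n-2)/2}-1)u(x).
\end{align*}
The scalar Balakrishnan identity $\lambda^{2s}=\tfrac{1}{\Gamma(-2s)}\int_0^\infty(e^{-t\lambda}-1)\tfrac{dt}{t^{1+2s}}$ applied with $\lambda=\tfrac{n-2}{2}$ converts the second summand into the additive term $(\tfrac{n-2}{2})^{2s}u(x)$. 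A Fubini exchange in the first summand, using $1/\Gamma(-2s)=-1/|\Gamma(-2s)|$ to flip the sign of $u(y)-u(x)$, yields the announced integral expression with the claimed kernel $L_{2s}$.

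\smallskip

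To justify the Fubini swap I will combine the Lipschitz estimate $|u(x)-u(y)|\leq C\,d(x,y)$ (which follows from the smoothness of $u$) with the explicit bound $P_{e^{-t}}(x\cdot y)\lesssim t/(t^2+d(x,y)^2)^{n/2}$ for $0<t<1$ and the uniform boundedness of the Poisson kernel for $t\geq 1$, where the factor $e^{-t(n-2)/2}$ additionally provides exponential decay. These show that the double integral of absolute values is finite whenever $0<2s<1$.

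\smallskip

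For the two-sided kernel estimate I will mimic the splitting of Theorem \ref{thm:negative powers any n} and write $L_{2s}=I+II$, where $I$ is the integral on $(0,1)$ and $II$ is the integral on $(1,\infty)$. For $I$, the small-$t$ asymptotic $P_{e^{-t}}(x\cdot y)\sim t/(t^2+d(x,y)^2)^{n/2}$ together with the substitution $t=d(x,y)\omega$ produces
$$I\sim d(x,y)^{-(n-1)-2s}\int_0^{1/d(x,y)}\frac{d\omega}{\omega^{2s}(\omega^2+1)^{n/2}}.$$
Since $0<2s<1$, the remaining integral converges to a strictly positive finite limit as $d(x,y)\to 0^+$ and stays bounded above and below by positive constants for $d(x,y)\in(0,\pi]$. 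For $II$, the Poisson kernel is uniformly bounded on $t\geq 1$ and the weight $e^{-t(n-2)/2}$ yields exponential decay, so $II=O(1)$, which is dominated by $d(x,y)^{-(n-1)-2s}$ on the compact sphere. The main subtlety will be securing the \emph{lower} bound in the $\sim$ comparison: it relies on the integrability of $\omega^{-2s}$ near $\omega=0$, an obstruction that disappears precisely in the range $2s<1$ assumed in the theorem.
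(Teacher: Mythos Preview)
Your proposal is correct and follows essentially the same route as the paper: apply the Balakrishnan formula to $L+\tfrac{n-2}{2}$, use $\int P_{e^{-t}}=1$ to split off the additive constant $(\tfrac{n-2}{2})^{2s}u(x)$, justify Fubini via the Lipschitz bound on $u$ and the small-$t$ asymptotic of the Poisson kernel, and estimate $L_{2s}$ by splitting at $t=1$ with the same substitution. One small wording slip: the integrability of $\omega^{-2s}$ near $\omega=0$ (i.e.\ the hypothesis $2s<1$) is what secures the \emph{upper} bound $I\leq C\,d(x,y)^{-(n-1)-2s}$, not the lower one---the lower bound follows automatically from positivity of the integrand and the fact that the domain of integration always contains $(0,1/\pi)$.
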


\begin{proof}
By applying the numerical formula
\begin{equation}\label{eq:numerical identity}
\lambda^{2s}=\frac{1}{\Gamma(-2s)}\int_0^\infty\big(e^{-t\lambda}-1\big)\,\frac{dt}{t^{1+2s}},
\end{equation}
with $\lambda=(k+\frac{n-2}{2})>0$ to \eqref{eq:fractional L} and recalling \eqref{Poisson sum}
we obtain the semigroup formula
$$(L+\tfrac{n-2}{2})^{2s}u(x)=\frac{1}{\Gamma(-2s)}\int_0^\infty\big(e^{-t(\frac{n-2}{2})}e^{-tL}u(x)
-u(x)\big)\,\frac{dt}{t^{1+2s}}.$$
Lemma \ref{Prop:bound t} shows that this integral is absolutely convergent.
From \eqref{Poisson semigroup} and the fact that
$$e^{-tL}1(x)=\int_{\S^{n-1}}P_{e^{-t}}(x\cdot y)\,d\mathcal{H}^{n-1}(y)\equiv1,$$
for all $t>0$ and $x\in\S^{n-1}$, we arrive to
\begin{equation}\label{prefubini}
\begin{aligned}
(L&+\tfrac{n-2}{2})^{2s}u(x) \\
&=\frac{1}{|\Gamma(-2s)|}\int_0^\infty\int_{\S^{n-1}}(u(x)-u(y))
e^{-t(\frac{n-2}{2})}P_{e^{-t}}(x\cdot y)\,d\mathcal{H}^{n-1}(y)\,\frac{dt}{t^{1+2s}} \\
&\quad +u(x)\frac{1}{\Gamma(-2s)}\int_0^\infty\big(e^{-t(\frac{n-2}{2})}-1\big)\,\frac{dt}{t^{1+2s}}.
\end{aligned}
\end{equation}
The identity in \eqref{eq:numerical identity} with $\lambda=\frac{n-2}{2}$ implies that
the second term in \eqref{prefubini} is equal to $(\frac{n-2}{2})^{2s}u(x)$.
We want to interchange the order of the integrals in the first term of
\eqref{prefubini}. On one hand, since $2s<1$,
\begin{align*}
    \int_0^1e^{-t(\frac{n-2}{2})}P_{e^{-t}}(x\cdot y)\,\frac{dt}{t^{1+2s}}
    &\sim \int_0^1\frac{t}{(t^2+(1-x\cdot y))^{n/2}}\,\frac{dt}{t^{1+2s}} \\
    &\sim\frac{1}{(1-x\cdot y)^{\frac{(n-1)+2s}{s}}}\sim \frac{1}{d(x,y)^{(n-1)+2s}}.
\end{align*}
Then, by the regularity of $u$ and the Funk--Hecke formula \eqref{Funk--Hecke},
\begin{align*}
    \int_0^1\int_{\S^{n-1}}|u(x)-u(y)|&e^{-t(\frac{n-2}{2})}P_{e^{-t}}(x\cdot y)\,d\mathcal{H}^{n-1}(y)\,\frac{dt}{t^{1+2s}} \\
    &\leq c_{n,s}\int_{\S^{n-1}}\frac{(1-x\cdot y)^{1/2}}{(1-x\cdot y)^{\frac{(n-1)+2s}{2}}}\,dy \\
    &= c_{n,s}\int_{-1}^1(1-\tau)^{\frac{(1-2s)-(n-1)}{2}}(1-\tau^2)^{\frac{n-3}{2}}\,d\tau<\infty,
\end{align*}
because $2s<1$. On the other hand,
$$\int_1^\infty e^{-t(\frac{n-2}{2})}P_{e^{-t}}(x\cdot y)\,\frac{dt}{t^{1+2s}}
\sim\int_1^\infty e^{-t(\frac{n-2}{2})}\frac{dt}{t^{1+2s}}\sim 1,$$
which gives
$$\int_1^\infty\int_{\S^{n-1}}|u(x)-u(y)|e^{-t(\frac{n-2}{2})}P_{e^{-t}}(x\cdot y)\,d\mathcal{H}^{n-1}(y)\,
\frac{dt}{t^{1+2s}}\leq c_{n,s}\|u\|_{L^\infty(\S^{n-1})}.$$
Thus the double integral in \eqref{prefubini} is absolutely convergent.
The integral representation in the statement then follows from Fubini's theorem.
In addition, the computations above prove the estimate for the kernel $L_{2s}(x\cdot y)$.
\end{proof}

\subsection{The fractional integral operator $S^s$}

The remaining operator $S^su$ in Theorem \ref{Thm:Frac Lap semigroup}
is a fractional integral operator. This is in some sense consistent with the
numerical identities involved in the proof of Lemma \ref{Lem:Minak positive}.

\begin{theorem}[Fractional integral operator $S^s$]
Let $0<s<1$. Then
$$S^s u(x)=\int_{\S^{n-1}}S_s(x\cdot y)u(y)\,d\mathcal{H}^{n-1}(y),$$
for $x\in\S^{n-1}$. The kernel $S_s(x\cdot y)$ is given by
$$S_s(x\cdot y)=\frac{(n-2)^{s+1/2}}{\pi^{-1/2}\Gamma(-s)}\int_0^\infty
e^{-t(\frac{n-2}{2})}P_{e^{-t}}(x\cdot y)\mathcal{B}_{-s-1/2}(\tfrac{n-2}{2}t)\,\frac{dt}{t^{1/2+s}},$$
and satisfies the estimate
$$|S_{s}(x\cdot y)|\leq \frac{c_{n,s}}{d(x,y)^{(n-1)-(2-2s)}},$$
for all $x,y\in\S^{n-1}$, $x\neq y$.
\end{theorem}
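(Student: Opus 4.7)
The plan is to insert the convolution representation \eqref{Poisson semigroup} of $e^{-tL}u(x)$ into the formula for $S^s u(x)$ from Theorem \ref{Thm:Frac Lap semigroup} and then interchange the order of integration. After substitution we have
$$S^s u(x) = \frac{(n-2)^{s+1/2}}{\pi^{-1/2}\Gamma(-s)} \int_0^\infty\int_{\S^{n-1}} e^{-t(\frac{n-2}{2})} P_{e^{-t}}(x\cdot y)\, u(y)\, \mathcal{B}_{-s-1/2}(\tfrac{n-2}{2}t)\, d\mathcal{H}^{n-1}(y)\,\frac{dt}{t^{1/2+s}},$$
so once Fubini is justified the kernel $S_s(x\cdot y)$ is precisely the inner integral in $t$. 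I would establish the pointwise kernel estimate first, because the same computation also supplies the absolute integrability needed to apply Fubini.

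Fix $x\neq y$ on $\S^{n-1}$ and split $\int_0^\infty = \int_0^1 + \int_1^\infty$. For the small-$t$ piece I would invoke the first asymptotic in \eqref{asintoticas B}, namely $|\mathcal{B}_{-s-1/2}(\tfrac{n-2}{2}t)|\leq c_s t^{3/2-s}$, together with the expansion of the Poisson kernel giving $P_{e^{-t}}(x\cdot y)\sim t/(t^2+(1-x\cdot y))^{n/2}$ for $t\in(0,1)$, so that the integrand is controlled by a constant multiple of
$$\frac{t\cdot t^{3/2-s}}{(t^2+(1-x\cdot y))^{n/2}}\cdot \frac{1}{t^{1/2+s}} = \frac{t^{2-2s}}{(t^2+(1-x\cdot y))^{n/2}}.$$
Setting $a^2=1-x\cdot y\sim d(x,y)^2$ and substituting $t=a\omega$ yields
$$\int_0^1 \frac{t^{2-2s}}{(t^2+a^2)^{n/2}}\,dt = a^{-(n-3+2s)}\int_0^{1/a}\frac{\omega^{2-2s}}{(\omega^2+1)^{n/2}}\,d\omega,$$
and the $\omega$-integral extended to $\infty$ converges provided $n+2s>3$, which holds throughout this section since $n\geq 3$. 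This delivers the advertised bound $d(x,y)^{-((n-1)-(2-2s))}$. For the large-$t$ piece, the exponential growth of $\mathcal{B}_{-s-1/2}$ from \eqref{asintoticas B} cancels exactly with $e^{-t(n-2)/2}$, the Poisson kernel is uniformly bounded on $t\geq 1$, and one is left with $\int_1^\infty dt/t^{1+s}$, which is finite; this piece therefore contributes only a bounded term.

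With the pointwise kernel bound in hand, Fubini's theorem applies: $u$ is bounded on $\S^{n-1}$, the function $y\mapsto d(x,y)^{-((n-1)-(2-2s))}$ is integrable over $\S^{n-1}$ since $(n-1)-(2-2s)<n-1$, and the joint integrand is absolutely integrable. Swapping the order of integration produces the claimed representation $S^s u(x)=\int_{\S^{n-1}}S_s(x\cdot y) u(y)\,d\mathcal{H}^{n-1}(y)$ with the kernel as stated. The main technical obstacle is the careful bookkeeping of exponents in the small-$t$ regime: $\mathcal{B}_{-s-1/2}$ is $\mathcal{I}_{-s-1/2}$ \emph{minus} its leading Taylor term, so the correct near-zero exponent is $2-s-1/2=3/2-s$ rather than $-s-1/2$. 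This gain of two powers of $t$ is exactly what turns the divergent integral one would get from using $\mathcal{I}_{-s-1/2}$ directly into the convergent one above, and it is responsible for $S^s$ behaving as a fractional integral of order $2-2s$ rather than a fractional Laplacian of order $2s$.
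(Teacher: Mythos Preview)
Your proposal is correct and follows essentially the same route as the paper: insert the Poisson-kernel representation of $e^{-tL}u$, split $\int_0^\infty=\int_0^1+\int_1^\infty$, use the small-$t$ asymptotic $|\mathcal{B}_{-s-1/2}(\nu t)|\lesssim t^{3/2-s}$ together with $P_{e^{-t}}(x\cdot y)\sim t/(t^2+(1-x\cdot y))^{n/2}$ and a rescaling to get the $d(x,y)^{-((n-1)-(2-2s))}$ bound, and for $t\ge1$ let the exponential growth of $\mathcal{B}_{-s-1/2}$ cancel $e^{-t(n-2)/2}$ against a bounded Poisson kernel to leave $\int_1^\infty t^{-1-s}\,dt$. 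Your explicit check that the resulting kernel is integrable in $y$ (exponent $<n-1$) to justify Fubini is a welcome addition the paper leaves implicit.
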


\begin{proof}
The spherical convolution formula for $S_su(x)$ in the statement can be derived by using the
kernel representation of $e^{-tL}u(x)$ and Fubini's theorem.
To prove the estimate for the kernel we proceed
as we did for the kernel of the negative powers $(-\Delta_{\S^{n-1}})^{-s}$ in the
proof of Theorem \ref{thm:negative powers any n}.
That is, we split the integral that defines $S_{s}(x\cdot y)$ as the sum of two integrals $I+II$
and we follow analogous computations.
Recall that $n\geq3$. By the estimates for $\mathcal{B}_{s-1/2}$ in \eqref{asintoticas B},
\begin{align*}
    |I| &\leq c_{n,s}\int_0^1\frac{t^{2-2s}}{t^n+d(x,y)^{n}}\,dt \\
     &\leq \frac{c_{n,s}}{d(x,y)^{n}}\int_0^1\frac{t^{2-2s}}{(\frac{t}{d(x,y)})^n+1}\,dt \\
     &=\frac{c_{n,s}}{d(x,y)^{(n-1)-(2-2s)}}\int_0^{\frac{1}{d(x,y)}}\frac{\omega^{2-2s}}{\omega^n+1}\,d\omega \\
     &\leq \frac{c_{n,s}}{d(x,y)^{(n-1)-(2-2s)}},
\end{align*}
because $2-2s-n+1<0$. For the integral $II$, notice that 
$|P_{e^{-t}}(x\cdot y)|\leq c_n$, uniformly in $x\cdot y$, for all $t>1$.
With this and \eqref{asintoticas B} we get
$$|II|\leq c_{n,s}\int_1^\infty e^{-t(\frac{n-2}{2})}\frac{e^{t(\frac{n-2}{2})}}{t^{1/2}}\,\frac{dt}{t^{1/2+s}}=c_{n,s}.$$
\end{proof}

\section{Fractional Laplacians and the heat semigroup on the sphere. \\ Extension problem
and Harnack inequality}\label{Section:extension}

We have shown how the Minakshisudaram zeta function and our
Lemma \ref{Lem:Minak positive}, in combination
with a careful manipulation of the kernel $P_{e^{-t}}(x\cdot y)$ of the semigroup $e^{-tL}u$
generated by the Dirichlet-to-Neumann map $L$,
permit us to obtain integro-differential formulas for fractional powers of the
Laplacian on the sphere, with precise kernel estimates.
In this section we present another technique to finding
pointwise formulas, namely, by means of the method of heat semigroups. This method
was first introduced in \cite{Stinga-Torrea-CPDE} and later on extended
to the most general case in \cite{Gale-Miana-Stinga}.
With this point of view we can also prove an extension problem characterization,
which implies the interior Harnack inequality.
The reader should recall that in section \ref{Section:One-dimensional-positive}
we used the heat semigroup (which in this case is the Jacobi
theta function) to analyze the kernel of the fractional Laplacian on the unit circle
in connection with the Hurwitz zeta function.
Further applications of these ideas
can be found, for example, in \cite{Caffarelli-Silvestre CPDE, Caffarelli-Stinga,
Caffarelli-Vasseur, Roncal-Stinga1, Stinga-Zhang}.

\subsection{Fractional Laplacians on the sphere and the heat semigroup}

The solution $v=v(t,x)$ to the heat equation on the sphere
$$\begin{cases}
\partial_tv=\Delta_{\S^{n-1}}v,&\hbox{for}~t>0,~x\in\S^{n-1},\\
v(0,x)=u(x),&\hbox{for}~x\in\S^{n-1},
\end{cases}$$
is given by the heat semigroup generated by $-\Delta_{\S^{n-1}}$:
\begin{equation}\label{heat semigroup spectral}
\begin{aligned}
e^{t\Delta_{\S^{n-1}}}u(x)\equiv v(t,x) &=\sum_{k=0}^\infty e^{-t\lambda_k}P_ku(x) \\
&=\sum_{k=0}^\infty e^{-t(k(k+n-2))}\sum_{l=0}^{d_k}
c_{k,l}(u)Y_{k,l}(x),
\end{aligned}
\end{equation}
where $P_k$ is the orthogonal projector \eqref{projector}.
By writing down the definition of $c_{k,l}(u)$ and interchanging sums and integral,
we see that the heat semigroup on the sphere
can be written as a spherical convolution
$$e^{t\Delta_{\S^{n-1}}}u(x)=\int_{\S^{n-1}}W_t(x\cdot y)u(y)\,d\mathcal{H}^{n-1}(y).$$
The heat kernel $W_t(x\cdot y)$ is given by
\begin{align*}
W_t(x\cdot y) &= \sum_{k=0}^\infty e^{-t(k(k+n-2))}\sum_{l=0}^{d_k}Y_{k,l}(x)Y_{k,l}(y) \\
&=\frac{1}{\omega_{n-1}}\sum_{k=0}^\infty e^{-t(k(k+n-2))}\frac{k+\frac{n-2}{2}}{\frac{n-2}{2}}\,C_k(x\cdot y),
\end{align*}
where in the second identity we applied \eqref{eq:super identity}.
This kernel turns out to be a smooth, positive function of $t>0$, $x,y\in\S^{n-1}$,
see \cite[Chapter~5]{Davies}. Moreover, for any $t>0$ and $x\in\S^{n-1}$,
\begin{equation}\label{eq:heat kernel at 1}
e^{t\Delta_{\S^{n-1}}}1(x)=\int_{\S^{n-1}}W_t(x\cdot y)\,d\mathcal{H}^{n-1}(y)\equiv1.
\end{equation}
In addition, $W_t(x\cdot y)$ satisfies two-sided Gaussian estimates. In fact,
there is a constant $C>0$ that depends only on $n$ such that
\begin{equation}\label{upper}
W_t(x\cdot y)\leq\frac{C}{t^{(n-1)/2}}e^{-d(x,y)^2/(8t)},
\end{equation}
and 
\begin{equation}\label{lower}
W_t(x\cdot y)\geq\frac{C^{-1}}{t^{(n-1)/2}}e^{-d(x,y)^2/(4t)},
\end{equation}
for all $t>0$, for any $x,y\in\S^{n-1}$, see \cite[Theorems~5.5.6~and~5.6.1]{Davies}.

The heat semigroup formulas for the fractional operators are obtained as follows.
For any $\lambda\geq0$ and $0<s<1$ we have
$$\lambda^s=\frac{1}{\Gamma(-s)}\int_0^\infty\big(e^{-t\lambda}-1\big)\,\frac{dt}{t^{1+s}}.$$
By taking $\lambda=\lambda_k=k(k+n-2)$ and recalling the spectral definition of
the fractional Laplacian on the sphere \eqref{spectral-formula}
we immediately see that
\begin{equation}\label{eq:semigroup formula}
(-\Delta_{\S^{n-1}})^{s}u(x)=\frac{1}{\Gamma(-s)}\int_0^\infty
\big(e^{t\Delta_{\S^{n-1}}}u(x)-u(x)\big)\,\frac{dt}{t^{1+s}}.
\end{equation}
For the negative fractional powers we use that, for any $\lambda,s>0$,
$$\lambda^{-s}=\frac{1}{\Gamma(s)}\int_0^\infty e^{-t\lambda}\,\frac{dt}{t^{1-s}}.$$
If $\displaystyle\int_{\S^{n-1}}u=0$ then we can take $\lambda=\lambda_k$
in the formula above and use the spectral definition \eqref{spectral-formula} to infer that
\begin{equation}\label{eq:semigroup formula negativa}
(-\Delta_{\S^{n-1}})^{-s}u(x)=\frac{1}{\Gamma(s)}\int_0^\infty
e^{t\Delta_{\S^{n-1}}}u(x)\,\frac{dt}{t^{1-s}}.
\end{equation}

\begin{theorem}[Fractional Laplacians and heat semigroup]\label{thm:fractionalandheat}
Let $s>0$.
\begin{enumerate}[$(1)$]
\item If $0<s<1/2$ then
$$(-\Delta_{\S^{n-1}})^{s}u(x)=\int_{\S^{n-1}}(u(x)-u(y))K_{s}(x\cdot y)\,d\mathcal{H}^{n-1}(y),$$
where the integral is absolutely convergent. If $1/2\leq s<1$ then
\begin{align*}
(-\Delta_{\S^{n-1}})^{s}u(x) &= \PV\int_{\S^{n-1}}(u(x)-u(y))K_{s}(x\cdot y)\,d\mathcal{H}^{n-1}(y) \\
&=\int_{\S^{n-1}}(u(x)-u(y)-\nabla_{\S^{n-1}}u(x)\cdot(x-y))K_{s}(x\cdot y)\,d\mathcal{H}^{n-1}(y),
\end{align*}
where the second integral is absolutely convergent.
In both cases the kernel $K_s(x\cdot y)$ is given by
$$K_{s}(x\cdot y)=\frac{1}{|\Gamma(-s)|}\int_0^\infty W_t(x\cdot y)\,\frac{dt}{t^{1+s}}>0,$$
and satisfies the estimate
\begin{equation}\label{eq:kernel estimate}
K_s(x\cdot y)\sim\frac{1}{d(x,y)^{(n-1)+2s}},
\end{equation}
for all $x,y\in\S^{n-1}$, $x\neq y$.
\item If $0<s<\frac{n-1}{2}$ and $\displaystyle\int_{\S^{n-1}}u=0$ then
$$(-\Delta_{\S^{n-1}})^{-s}u(x)=\int_{\S^{n-1}}K_{-s}(x\cdot y)u(y)\,d\mathcal{H}^{n-1}(y),$$
as in Theorem \ref{thm:negative powers any n}.
An equivalent formula for the kernel $K_{-s}(x\cdot y)$ in \eqref{zeta-integral} is
$$K_{-s}(x\cdot y)=\frac{1}{\Gamma(s)}\int_0^\infty W_t(x\cdot y)\,\frac{dt}{t^{1-s}}>0,$$
from which we can deduce the estimate
\begin{equation}\label{eq:kernel estimate negativa}
K_{-s}(x\cdot y)\sim\frac{1}{d(x,y)^{(n-1)-2s}},
\end{equation}
for all $x,y\in\S^{n-1}$, $x\neq y$.
\end{enumerate}
\end{theorem}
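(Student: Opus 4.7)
The plan is to derive both identities directly from the heat-semigroup representations \eqref{eq:semigroup formula} and \eqref{eq:semigroup formula negativa} by substituting the convolution formula $e^{t\Delta_{\S^{n-1}}}u(x)=\int_{\S^{n-1}}W_t(x\cdot y)u(y)\,d\mathcal{H}^{n-1}(y)$, exchanging the order of integration via Fubini, and reading off the kernel estimates from the two-sided Gaussian bounds \eqref{upper}--\eqref{lower}. Positivity of $K_{\pm s}$ is automatic from $W_t>0$, and in both cases the key calculation is
$$\int_0^\infty t^{-(n-1)/2}\,e^{-c\,d(x,y)^2/t}\,\frac{dt}{t^{1\pm s}}=C\,d(x,y)^{-(n-1)\mp 2s},$$
obtained by the substitution $u=c\,d(x,y)^2/t$; the resulting gamma factor is finite precisely in the stated ranges of $s$.

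Part (2) is the simplest case: inserting the convolution formula into \eqref{eq:semigroup formula negativa}, the majorant from \eqref{upper} and the computation above produce the bound $C\,d(x,y)^{-(n-1)+2s}$, which is integrable on $\S^{n-1}$ exactly when $s<(n-1)/2$. This simultaneously justifies Fubini, gives the integral representation, and yields the upper half of \eqref{eq:kernel estimate negativa} (the matching lower bound comes from \eqref{lower}). Agreement with the formula in Theorem \ref{thm:negative powers any n} follows from the fact that the two candidate kernels define the same bounded operator on smooth mean-zero functions, which forces equality via \eqref{spectral-formula}.

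For part (1) in the range $0<s<1/2$, I would use $e^{t\Delta_{\S^{n-1}}}1=1$ from \eqref{eq:heat kernel at 1} to rewrite $e^{t\Delta_{\S^{n-1}}}u(x)-u(x)=\int_{\S^{n-1}}(u(y)-u(x))W_t(x\cdot y)\,d\mathcal{H}^{n-1}(y)$ and absorb the sign of $\Gamma(-s)<0$ to reach
$$(-\Delta_{\S^{n-1}})^s u(x)=\frac{1}{|\Gamma(-s)|}\int_0^\infty\int_{\S^{n-1}}(u(x)-u(y))\,W_t(x\cdot y)\,d\mathcal{H}^{n-1}(y)\,\frac{dt}{t^{1+s}}.$$
Smoothness of $u$ gives $|u(x)-u(y)|\le C\,d(x,y)$, so after applying the gamma identity to integrate in $t$ first the integrand is controlled by $d(x,y)^{1-(n-1)-2s}$, which is integrable on $\S^{n-1}$ for $s<1/2$; Fubini yields the first formula and \eqref{eq:kernel estimate} follows from \eqref{upper}--\eqref{lower}.

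The main obstacle is the singular range $1/2\le s<1$, where the previous bound fails. The plan is to combine Taylor cancellation with the radial symmetry of $W_t$. Smoothness of $u$ provides
$$\bigl|u(y)-u(x)-\nabla_{\S^{n-1}}u(x)\cdot(y-x)\bigr|\le C\,d(x,y)^2,$$
so the regularised integrand is bounded by $C\,d(x,y)^{2-(n-1)-2s}$, which is integrable on $\S^{n-1}$ for all $s<1$. On the other hand, because the coordinate functions $y_j$ are degree-one spherical harmonics with eigenvalue $\lambda_1=n-1$, the Funk--Hecke identity \eqref{Funk--Hecke} yields
$$\int_{\S^{n-1}}(x-y)\,W_t(x\cdot y)\,d\mathcal{H}^{n-1}(y)=(1-e^{-\lambda_1 t})\,x,$$
and an identical computation for any kernel depending only on $x\cdot y$ gives
$$\int_{\{d(x,y)>\varepsilon\}}\nabla_{\S^{n-1}}u(x)\cdot(x-y)\,K_s(x\cdot y)\,d\mathcal{H}^{n-1}(y)=0,\qquad\varepsilon>0,$$
because $\nabla_{\S^{n-1}}u(x)\perp x$. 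Adding this vanishing linear correction inside the inner integral of the semigroup formula for each $t>0$ converts the integrand into $(u(x)-u(y)-\nabla_{\S^{n-1}}u(x)\cdot(x-y))\,W_t(x\cdot y)$, to which Fubini now applies, yielding the absolutely convergent formula. Inserting the same correction inside the truncated integral defining the principal value and letting $\varepsilon\to 0$ by dominated convergence identifies the principal value with the absolutely convergent expression.
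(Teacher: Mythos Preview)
Your proposal is correct and follows the same route as the paper: semigroup formulas \eqref{eq:semigroup formula}--\eqref{eq:semigroup formula negativa}, the Gaussian bounds \eqref{upper}--\eqref{lower} reduced to a Gamma integral for the kernel estimates, and Fubini justified via Lipschitz (for $0<s<1/2$) or second-order Taylor (for $1/2\le s<1$) control of $u$. Your handling of the vanishing linear correction through Funk--Hecke together with $\nabla_{\S^{n-1}}u(x)\perp x$ is in fact slightly more precise than the paper's, which asserts the componentwise identity $\int_{\S^{n-1}\setminus B_\varepsilon(x)}(x_i-y_i)W_t(x\cdot y)\,d\mathcal{H}^{n-1}(y)=0$ for \emph{every} $i$; only the components orthogonal to $x$ actually vanish, but since the spherical gradient is tangential this is exactly what is required, as you observe.
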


\begin{proof}
Let us begin by proving (1).
We first estimate the kernel $K_s(x\cdot y)$ by applying \eqref{upper}--\eqref{lower}.
Indeed, by using the change of variables $r=d(x,y)^2/(8t)$,
\begin{align*}
\int_0^\infty W_t(x\cdot y)\,\frac{dt}{t^{1+s}} &\leq C\int_0^\infty\frac{1}{t^{(n-1)/2}}e^{-d(x,y)^2/(8t)}\,\frac{dt}{t^{1+s}} \\
&= \frac{c_{n,s}}{d(x,y)^{(n-1)+2s}}\int_0^\infty e^{-r}r^{(n-1)/2+s}\,\frac{dr}{r} \\
&=\frac{c_{n,s}}{d(x,y)^{(n-1)+2s}}.
\end{align*} 
The lower bound follows analogously via the change of variables $r=d(x,y)^2/(4t)$.
Therefore \eqref{eq:kernel estimate} follows.

From the semigroup formula \eqref{eq:semigroup formula} and by using \eqref{eq:heat kernel at 1} we get
\begin{equation}\label{eq:bla}
(-\Delta_{\S^{n-1}})^s u(x)=\frac{1}{|\Gamma(-s)|}\int_0^\infty\int_{\S^{n-1}}
(u(x)-u(y))W_t(x\cdot y)d\mathcal{H}^{n-1}(y)\frac{dt}{t^{1+s}}.
\end{equation}
The pointwise formulas in $(1)$ will follow after we apply Fubini's theorem.

Suppose that $0<s<1/2$. Then the double integral in \eqref{eq:bla}
is absolutely convergent because of \eqref{eq:kernel estimate} and
the estimate $|u(x)-u(y)|\leq C|x-y|\sim Cd(x,y)$. Thus
Fubini's theorem can be applied to conclude.

Suppose next that $1/2\leq s<1$. Observe that 
$W_t(x\cdot y)=F(|x-y|)$ for some function $F:\R\to\R$. This implies that
\begin{equation}\label{cero}
\int_{\S^{n-1}\setminus B_\varepsilon(x)}(x_i-y_i)W_t(x\cdot y)\,d\mathcal{H}^{n-1}(y)=0,
\end{equation}
for any $i=1,\ldots,n$, for every $\varepsilon\geq 0$.
Hence the double integral in \eqref{eq:bla} can be written as
$$\int_0^\infty\int_{\S^{n-1}}
(u(x)-u(y)-\nabla u(x)(x-y))W_t(x\cdot y)d\mathcal{H}^{n-1}(y)\frac{dt}{t^{1+\s}}.$$
By \eqref{eq:kernel estimate} and
the estimate $|u(x)-u(y)-\nabla u(x)(x-y)|\leq C|x-y|^2\sim Cd(x,y)^2$, this double
integral is absolutely convergent, so that Fubini's theorem can be used. The principal value
formula follows by applying \eqref{cero} for each $\varepsilon>0$.

Let us continue with the proof of $(2)$.
The pointwise formula for $(-\Delta_{\S^{n-1}})^{-s}u(x)$
follows by writing down the heat kernel in \eqref{eq:semigroup formula negativa}
and using Fubini's theorem. This shows also the equivalent heat semigroup formula for
the kernel $K_{-s}(x\cdot y)$.
The estimate in \eqref{eq:kernel estimate negativa} is obtained
by using the heat kernel bounds as above. We just do the upper bound, the lower bound
is completely analogous. We have
\begin{align*}
K_{-s}(x\cdot y) &\leq C\int_0^\infty\frac{1}{t^{(n-1)/2}}e^{-d(x,y)^2/(8t)}\,\frac{dt}{t^{1-s}} \\
&= \frac{c_{n,s}}{d(x,y)^{(n-1)-2s}}\int_0^\infty e^{-r}r^{(n-1)/2-s}\,\frac{dr}{r},
\end{align*}
and the last integral is finite because $(n-1)/2-s>0$.
\end{proof}

\subsection{Extension problem and Harnack inequality}

The extension problem for the fractional Laplacian on the sphere is a particular
case of the general extension problem proved in \cite{Stinga-Torrea-CPDE}, see also \cite{Gale-Miana-Stinga}.
We present the proof here for the convenience of the reader. As an application,
the interior Harnack inequality is proved.

\begin{theorem}[Extension problem]\label{thm:extension}
Let $0<s<1$. Define
$$U(x,y)=\frac{y^{2s}}{4^s\Gamma(s)}\int_0^\infty e^{-y^2/(4t)}e^{t\Delta_{\S^{n-1}}}u(x)\,\frac{dt}{t^{1+s}},$$
for $x\in\S^{n-1}$, $y>0$. Then $U$ solves
$$\begin{cases}
\Delta_{\S^{n-1}}U+\frac{1-2s}{y}\partial_yU+\partial_{yy}U=0,&\hbox{for}~x\in\S^{n-1},~y>0,\\
U(x,0)=u(x),&\hbox{for}~x\in\S^{n-1},\\
-y^{1-2s}\partial_yU(x,y)\big|_{y=0^+}=\frac{\Gamma(1-s)}{4^{s-1/2}\Gamma(s)}
(-\Delta_{\S^{n-1}})^s u(x),&\hbox{for}~x\in\S^{n-1}.
\end{cases}$$
Moreover,
if $\displaystyle\int_{\S^{n-1}}u=0$ then
\begin{equation}\label{eq:g}
U(x,y)=\frac{1}{\Gamma(s)}\int_0^\infty e^{-y^2/(4t)}e^{t\Delta_{\S^{n-1}}}
\big((-\Delta_{\S^{n-1}})^s u\big)(x)\,\frac{dt}{t^{1-s}}.
\end{equation}
\end{theorem}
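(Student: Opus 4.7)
The plan is to exploit the spectral decomposition on the sphere, reducing both the extension PDE and the Neumann condition to a family of one-variable Bessel ODEs. Expanding $u$ as in \eqref{decomp f} and using that $e^{t\Delta_{\S^{n-1}}}$ is diagonal on the eigenspaces, the definition of $U$ factors as
$$U(x,y) = \sum_{k=0}^\infty \sum_{l=1}^{d_k} c_{k,l}(u)\,Y_{k,l}(x)\,\phi_k(y), \quad\text{where}\quad \phi_k(y) = \frac{y^{2s}}{4^s\Gamma(s)}\int_0^\infty e^{-y^2/(4t)}e^{-t\lambda_k}\,\frac{dt}{t^{1+s}}.$$
The decay \eqref{eq:decaycoefficients} of $c_{k,l}(u)$ together with the growth bound \eqref{acotacion esf arm} on $Y_{k,l}$ and its derivatives will justify all interchanges of sums, derivatives, and integrals.

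The next step is to identify the $\phi_k$ in closed form. For $k=0$, the substitution $r = y^2/(4t)$ collapses $\phi_0$ to the constant $1$. For $k\geq 1$, the same substitution matches the classical integral representation of the Macdonald function and gives
$$\phi_k(y) = \frac{2}{\Gamma(s)}\left(\frac{y\sqrt{\lambda_k}}{2}\right)^{\!s} K_s\bigl(y\sqrt{\lambda_k}\bigr).$$
From the small-argument expansion $K_s(z)=\frac{\Gamma(s)}{2}(z/2)^{-s}+\frac{\Gamma(-s)}{2}(z/2)^{s}+O(z^{2-s})$, valid for $0<s<1$, one reads off
$$\phi_k(y) = 1 + \frac{\Gamma(-s)}{\Gamma(s)\,4^s}\,\lambda_k^s\,y^{2s} + O(y^2) \qquad \text{as } y\to 0^+.$$

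With these preparations, each of the three assertions becomes a single spectral check. The initial condition $U(x,0)=u(x)$ follows at once from $\phi_k(0^+)=1$ for every $k\geq 0$. Applying the extension operator to $U$ term by term reduces the PDE to the spectral ODE $\phi_k''(y)+\frac{1-2s}{y}\phi_k'(y)-\lambda_k\phi_k(y)=0$; the ansatz $\phi_k(y)=y^s g(y\sqrt{\lambda_k})$ transforms this into the modified Bessel equation $z^2 g''(z)+zg'(z)-(z^2+s^2)g(z)=0$, whose solution decaying at infinity is $g=K_s$. For the Neumann condition, the expansion above yields
$$y^{1-2s}\phi_k'(y)\big|_{y=0^+} = \frac{2s\,\Gamma(-s)}{\Gamma(s)\,4^s}\,\lambda_k^s = -\frac{\Gamma(1-s)}{\Gamma(s)\,4^{s-1/2}}\,\lambda_k^s,$$
after using $s\Gamma(-s)=-\Gamma(1-s)$ and $2/4^s = 1/4^{s-1/2}$. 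Summing against $c_{k,l}(u)Y_{k,l}(x)$ produces the spectral representation \eqref{spectral-formula} of $(-\Delta_{\S^{n-1}})^s u$ with the stated constant.

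For the alternative formula \eqref{eq:g} under $\int_{\S^{n-1}}u=0$, I would expand its proposed right-hand side spectrally. Since $c_{0,1}(u)=0$ kills the $k=0$ term, only $k\geq 1$ contributes, and it suffices to verify
$$\lambda_k^s\cdot\frac{1}{\Gamma(s)}\int_0^\infty e^{-y^2/(4t)}e^{-t\lambda_k}\,\frac{dt}{t^{1-s}} = \phi_k(y),$$
which is a second Macdonald integral identity of the same type as above, essentially a consequence of $K_{-s}=K_s$. The main obstacle I anticipate is not analytic but bookkeeping: keeping the Bessel constants, Gamma-function identities, and spectral expansions aligned so that the factor $\Gamma(1-s)/(4^{s-1/2}\Gamma(s))$ in the Neumann constant emerges cleanly rather than buried in half-power exponents.
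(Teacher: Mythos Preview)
Your proposal is correct and follows essentially the same spectral/Bessel route as the paper: both write $U$ as $\sum_k \phi_k(y)P_ku(x)$, identify $\phi_k$ with $(y\lambda_k^{1/2})^s\mathcal{K}_s(y\lambda_k^{1/2})$ up to constants, verify the extension equation via the Bessel ODE, and obtain \eqref{eq:g} from the symmetry $\mathcal{K}_{-s}=\mathcal{K}_s$ (the paper phrases this last step as the change of variables $r=y^2/(4t\lambda_k)$, which is the same computation).

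The one place where you and the paper genuinely diverge is the Neumann condition. You extract it from the small-argument expansion of $\mathcal{K}_s$, which is clean once that expansion is on the table. The paper instead differentiates the defining integral for $U$ directly, uses the normalization $\frac{y^{2s}}{4^s\Gamma(s)}\int_0^\infty e^{-y^2/(4t)}\,t^{-1-s}\,dt=1$ to subtract off $u(x)$, and then lets $y\to0^+$ to land exactly on the heat-semigroup formula \eqref{eq:semigroup formula} for $(-\Delta_{\S^{n-1}})^s u$. Your route is a bit more special-functions flavored; the paper's route has the advantage of making the connection to \eqref{eq:semigroup formula} explicit and avoids any appeal to the $O(z^{2-s})$ remainder in the $\mathcal{K}_s$ expansion.
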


\begin{remark}[Extension problem for negative powers]
Let $f$ be a function on the sphere such that $\displaystyle\int_{\S^{n-1}}f=0$.
Consider the solution $u$ to
$$(-\Delta_{\S^{n-1}})^su=f,\quad0<s<1,$$
such that $\displaystyle\int_{\S^{n-1}}u=0$. Then \eqref{eq:g} reads
$$U(x,y)=\frac{1}{\Gamma(s)}\int_0^\infty e^{-y^2/(4t)}e^{t\Delta_{\S^{n-1}}}
f(x)\,\frac{dt}{t^{1-s}},$$
which solves the Neumann problem
$$\begin{cases}
\Delta_{\S^{n-1}}U+\frac{1-2s}{y}\partial_yU+\partial_{yy}U=0,&\hbox{for}~x\in\S^{n-1},~y>0,\\
-y^{1-2s}\partial_yU(x,y)\big|_{y=0^+}=\frac{\Gamma(1-s)}{4^{s-1/2}\Gamma(s)}f(x),&\hbox{for}~x\in\S^{n-1}.
\end{cases}$$
This is the extension problem for $(-\Delta_{\S^{n-1}})^{-s}$. Indeed,
$$U(x,0)=u(x)=(-\Delta_{\S^{n-1}})^{-s}f(x).$$
\end{remark}

\begin{proof}[Proof of Theorem \ref{thm:extension}]
By using the change of variables $y^2/(4t)=r$ in the definition of $U$, we have the equivalent formula
$$U(x,y)=\frac{1}{\Gamma(s)}\int_0^\infty e^{-r}e^{\frac{y^2}{4r}\Delta_{\S^{n-1}}}u(x)\,\frac{dr}{r^{1-s}},$$
from which immediately follows that $U(x,0)=u(x)$. Let $\mathcal{K}_\rho(z)$ denote the modified Bessel function
of second kind of order $\rho$, see \cite{Lebedev,online}.
By using its integral representation in \cite[eq.~(5.10.25)]{Lebedev} 
and the spectral definition of $e^{t\Delta_{\S^{n-1}}}u(x)$ in \eqref{heat semigroup spectral} we can write
\begin{align*}
U(x,y) &= \frac{y^{2s}}{4^s\Gamma(s)}\sum_{k=0}^\infty\bigg[\int_0^\infty e^{-y^2/(4t)}e^{-t\lambda_k}
\,\frac{dt}{t^{1+s}}\bigg]P_ku(x) \\
&= \frac{2^{1-s}}{\Gamma(s)}\sum_{k=0}^\infty (y\lambda_k^{1/2})^s\mathcal{K}_s(y\lambda_k^{1/2})P_ku(x) \\
&\equiv \frac{2^{1-s}}{\Gamma(s)}(y(-\Delta_{\S^{n-1}})^{1/2})^{s}\mathcal{K}_s(y(-\Delta_{\S^{n-1}})^{1/2})u(x).
\end{align*}
With any of these formulas and the identities for the derivatives of $\mathcal{K}_s(z)$
it is easy to check that $U$ satisfies the extension equation.
By noticing that 
$$\frac{y^{2s}}{4^s\Gamma(s)}\int_0^\infty e^{-y^2/(4t)}\frac{dt}{t^{1+s}}=1,$$
we get
\begin{align*}
-y^{1-2s}\partial_yU(x,y) &= \frac{1}{4^s\Gamma(s)}\int_0^\infty
\bigg(\frac{y^2}{2t}-2s\bigg)e^{-y^2/(4t)}e^{t\Delta_{\S^{n-1}}}u(x)\,\frac{dt}{t^{1+s}} \\
&=\frac{1}{4^s\Gamma(s)}\int_0^\infty
\bigg(\frac{y^2}{2t}-2s\bigg)e^{-y^2/(4t)}\big(e^{t\Delta_{\S^{n-1}}}u(x)-u(x)\big)\frac{dt}{t^{1+s}} \\
&\longrightarrow -\frac{2s}{4^s\Gamma(s)}\int_0^\infty
\big(e^{t\Delta_{\S^{n-1}}}u(x)-u(x)\big)\,\frac{dt}{t^{1+s}} \\
&\qquad =\frac{\Gamma(1-s)}{4^{s-1/2}\Gamma(s)}(-\Delta_{\S^{n-1}})^s u(x),\quad\hbox{as}~y\to0^+,
\end{align*}
where in the last identity we used the semigroup formula \eqref{eq:semigroup formula}.
If $\displaystyle\int_{\S^{n-1}}u=0$ then $P_0u(x)=0$ and, by the change of variables $r=y^2/(4t\lambda_k)$,
$k\geq1$, we obtain
\begin{align*}
U(x,y) &= \frac{y^{2s}}{4^s\Gamma(s)}\sum_{k=1}^\infty\bigg[\int_0^\infty e^{-y^2/(4t)}e^{-t\lambda_k}
\,\frac{dt}{t^{1+s}}\bigg]P_ku(x) \\
&=\frac{1}{\Gamma(s)}\sum_{k=1}^\infty\bigg[\int_0^\infty e^{-y^2/(4r)}e^{-r\lambda_k}\lambda_k^sP_ku(x)
\,\frac{dt}{t^{1+s}}\bigg],
\end{align*}
which gives \eqref{eq:g}.
\end{proof}

\begin{theorem}[Harnack inequality]
Let $\Omega'\subset\subset\Omega\subset\S^{n-1}$ be open sets. There is
a constant $C>0$ depending on $\Omega'$, $\Omega$, $n$ and $s$ such that
$$\sup_{\Omega'}u\leq C\inf_{\Omega'}u,$$
for any solution $u$ to
$$\begin{cases}
(-\Delta_{\S^{n-1}})^su=0,&\hbox{in}~\Omega,\\
u\geq0,&\hbox{in}~\S^{n-1},
\end{cases}$$
\end{theorem}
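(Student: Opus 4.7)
The plan is to use the extension problem characterization of $(-\Delta_{\S^{n-1}})^s$ from Theorem \ref{thm:extension}, together with the interior Harnack inequality for degenerate elliptic equations with Muckenhoupt $A_2$ weights due to Fabes, Kenig and Serapioni.

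First I would form the extension $U(x,y)$ of $u$ as in Theorem \ref{thm:extension}. Because the heat kernel $W_t(x\cdot y)$ is strictly positive by the lower Gaussian bound \eqref{lower}, the semigroup $e^{t\Delta_{\S^{n-1}}}$ preserves positivity, so $e^{t\Delta_{\S^{n-1}}}u(x)\geq 0$ for every $t>0$ and $x\in\S^{n-1}$. Integrating against the positive Bessel-type weight in the definition of $U$ then yields $U\geq 0$ on $\S^{n-1}\times[0,\infty)$. Next, since $(-\Delta_{\S^{n-1}})^s u=0$ in $\Omega$, the conormal condition in Theorem \ref{thm:extension} becomes $\lim_{y\to 0^+}y^{1-2s}\partial_y U(x,y)=0$ for every $x\in\Omega$. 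Extending $U$ evenly across $\{y=0\}$ by $\widetilde{U}(x,y)=U(x,|y|)$, the vanishing of this weighted normal trace makes $\widetilde{U}$ a weak solution on $\Omega\times(-R,R)$ of
$$\operatorname{div}\bigl(|y|^{1-2s}\nabla \widetilde{U}\bigr)=0,$$
where $\operatorname{div}$ and $\nabla$ are taken with respect to the product Riemannian metric on $\S^{n-1}\times\R$.

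Then I would localize. Cover the compact set $\overline{\Omega'}$ by finitely many geodesic coordinate balls on the sphere. In each such chart the equation above pulls back to a degenerate elliptic equation on a domain of $\R^n$ with bounded measurable, uniformly elliptic coefficients (arising from the components of the inverse metric tensor) and the scalar weight $|y|^{1-2s}$, which lies in $A_2(\R^n)$ for $0<s<1$. The Fabes--Kenig--Serapioni interior Harnack inequality for nonnegative weak solutions of $A_2$-weighted degenerate elliptic equations then provides a local Harnack estimate on a ball centered at $(x_0,0)$ for each $x_0\in\overline{\Omega'}$. A standard Harnack chain over the compact set $\overline{\Omega'}$ combines these local estimates into the global bound $\sup_{\Omega'}\widetilde{U}(\cdot,0)\leq C\inf_{\Omega'}\widetilde{U}(\cdot,0)$, which is the claimed inequality because $\widetilde{U}(x,0)=u(x)$.

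The main obstacle is the rigorous implementation of these last two steps: one must verify that the even reflection truly yields a weak solution across $\{y=0\}$ in the $A_2$-weighted Sobolev sense, and that the pulled-back equation in each spherical chart still falls within the Fabes--Kenig--Serapioni framework with weights that are uniformly $A_2$ independently of the chart. Both facts are essentially contained in the literature on the Caffarelli--Silvestre extension adapted to manifold settings, but warrant careful checking here because the base geometry is $\S^{n-1}$ rather than $\R^{n-1}$.
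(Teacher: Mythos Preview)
Your proposal is correct and follows essentially the same route as the paper: lift $u$ to the Caffarelli--Silvestre type extension $U$ of Theorem~\ref{thm:extension}, use positivity of the heat kernel to get $U\geq0$, reflect evenly across $\{y=0\}$ using the vanishing conormal derivative on $\Omega$, and then invoke the Fabes--Kenig--Serapioni Harnack inequality for the resulting $A_2$-weighted degenerate elliptic equation, restricting back to $y=0$. The paper only sketches these steps and leaves the chart-localization and reflection verification to the reader, so your more explicit account of those points is consistent with (and slightly more detailed than) the original argument.
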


\begin{proof}
The idea is to use the extension problem as done in other contexts, see for example
\cite{Caffarelli-Silvestre CPDE, Roncal-Stinga1, Stinga-Torrea-CPDE, Stinga-Zhang}.
We sketch the steps next. Details are left to the interested reader.
For any $u$ as in the statement, let $U$ be the solution to the extension problem
given by Theorem \ref{thm:extension}. Since $u\geq0$ in $\S^{n-1}$ and the heat kernel $W_t(x\cdot y)$
is positive, we have $e^{t\Delta_{\S^{n-1}}}u\geq0$ and thus
$U\geq0$ in $\S^{n-1}\times[0,\infty)$. The extension equation can be
written as
$$\operatorname{div}_{(\S^{n-1},y)}(y^{1-2s}\nabla_{(\S^{n-1},y)}U)=0,$$
where $\nabla_{(\S^{n-1},y)}=(\nabla_{\S^{n-1}},\partial_y)$ and
$\operatorname{div}_{(\S^{n-1},y)}$ is the corresponding divergence operator.
Let $\bar{U}(x,y)=U(x,|y|)\geq0$, for $x\in\S^{n-1}$ and $y\in\R$. By using that
$$-y^{1-2s}\partial_yU(x,y)\big|_{y=0^+}=\frac{\Gamma(1-s)}{4^{s-1/2}\Gamma(s)}(-\Delta_{\S^{n-1}})^su=0,
\quad\hbox{in}~\Omega,$$
it can be checked that $\bar{U}$ is a weak solution the the degenerate elliptic equation
$$\operatorname{div}_{(\S^{n-1},y)}(|y|^{1-2s}\nabla_{(\S^{n-1},y)}\bar{U})=0,$$
with Muckenhoupt weight $\omega(x,y)=|y|^{1-2s}\in A_2$, in $\{(x,y):x\in\Omega,-1/2<y<1/2\}$.
These degenerate elliptic equations admit interior Harnack inequality (see \cite{Fabes}),
so there exists a constant $C>0$ depending only on $\Omega$, $\Omega'$, $n$ and $s$ such that
$$\sup_{\Omega'\times(-1/4,1/4)}\bar{U}\leq C\inf_{\Omega'\times(-1/4,1/4)}\bar{U}.$$
By restricting $\bar{U}$ back to $y=0$ the Harnack inequality for $u$ follows.
\end{proof}

\noindent\textbf{Acknowledgements.} The authors would like to thank Sundaram Thangavelu
for providing a copy of Minakshisundaram's paper \cite{Minakshisundaram}. They are
also grateful to the referee for useful comments and questions.

\bibliographystyle{amsplain}



\end{document}